\documentclass{amsart}

\usepackage{amsmath,amssymb,amsthm,enumerate}

\usepackage{graphicx,tikz}
\newtheorem{theorem}{Theorem}[section]

\newtheorem*{corollary}{Corollary}

\newtheorem{lemma}[theorem]{Lemma}

\DeclareMathOperator*{\argmin}{argmin}
\DeclareMathOperator*{\argmax}{argmax}

\theoremstyle{definition}

\theoremstyle{remark}
\newtheorem{remark}[theorem]{Remark}

\begin{document}

\title[]{A common variable minimax theorem for graphs}

\keywords{Spectral graph theory, graph Laplacian, common variable detection}
\thanks{R.R.C is supported by the NIH (5R01NS100049-04). N.F.M. is supported by
the NSF (DMS-1903015). S.S. is supported by the NSF (DMS-2123224) and the
Alfred P. Sloan Foundation.}

\author[]{Ronald R. Coifman}
\address{Department of Mathematics, Yale University, New Haven}
\email{coifman@math.yale.edu}

\author[]{Nicholas F. Marshall}
\address{Department of Mathematics, Princeton University, Princeton}
\email{nicholas.marshall@princeton.edu}

\author[]{Stefan Steinerberger}
\address{Department of Mathematics, University of Washington, Seattle}
\email{steinerb@uw.edu}

\begin{abstract} Let $\mathcal{G} = \{G_1 = (V, E_1), \dots, G_m = (V, E_m)\}$ be a collection 
of $m$ graphs defined on a common set of vertices $V$ but with different edge sets $E_1, \dots, E_m$.
Informally, a function 
$f :V \rightarrow \mathbb{R}$ is smooth with respect to $G_k = (V,E_k)$  
if $f(u) \sim f(v)$ whenever $(u, v) \in E_k$. 
We study the problem of understanding whether there exists a nonconstant function that is smooth with 
respect to all graphs in $\mathcal{G}$, simultaneously, and how to find it if it exists.
\end{abstract}
\maketitle

\section{Introduction}
\subsection{Introduction} \label{intro} Let $G = (V,E)$ be a graph; loosely
speaking, a function $f :V \rightarrow \mathbb{R}$ is smooth with respect to
$G$ if it varies little over adjacent vertices meaning that $f(u) \sim f(v)$
whenever $(u,v) \in E$. Let $\mathcal{G}$ be a collection of $m$ graphs on the
same set of vertices $V$
$$
\mathcal{G} = \{ G_1=(V, E_1),  \ldots, G_m = (V, E_m) \}.
$$
We consider the following problem: among all mean zero unit norm
functions $f : V \rightarrow \mathbb{R}$ which is the smoothest with respect
to $\mathcal{G}$ (see \S \ref{collection} for a formal statement)?

\begin{figure}[h!]
\centering
\begin{tabular}{cc}
\includegraphics[width=.47\textwidth]{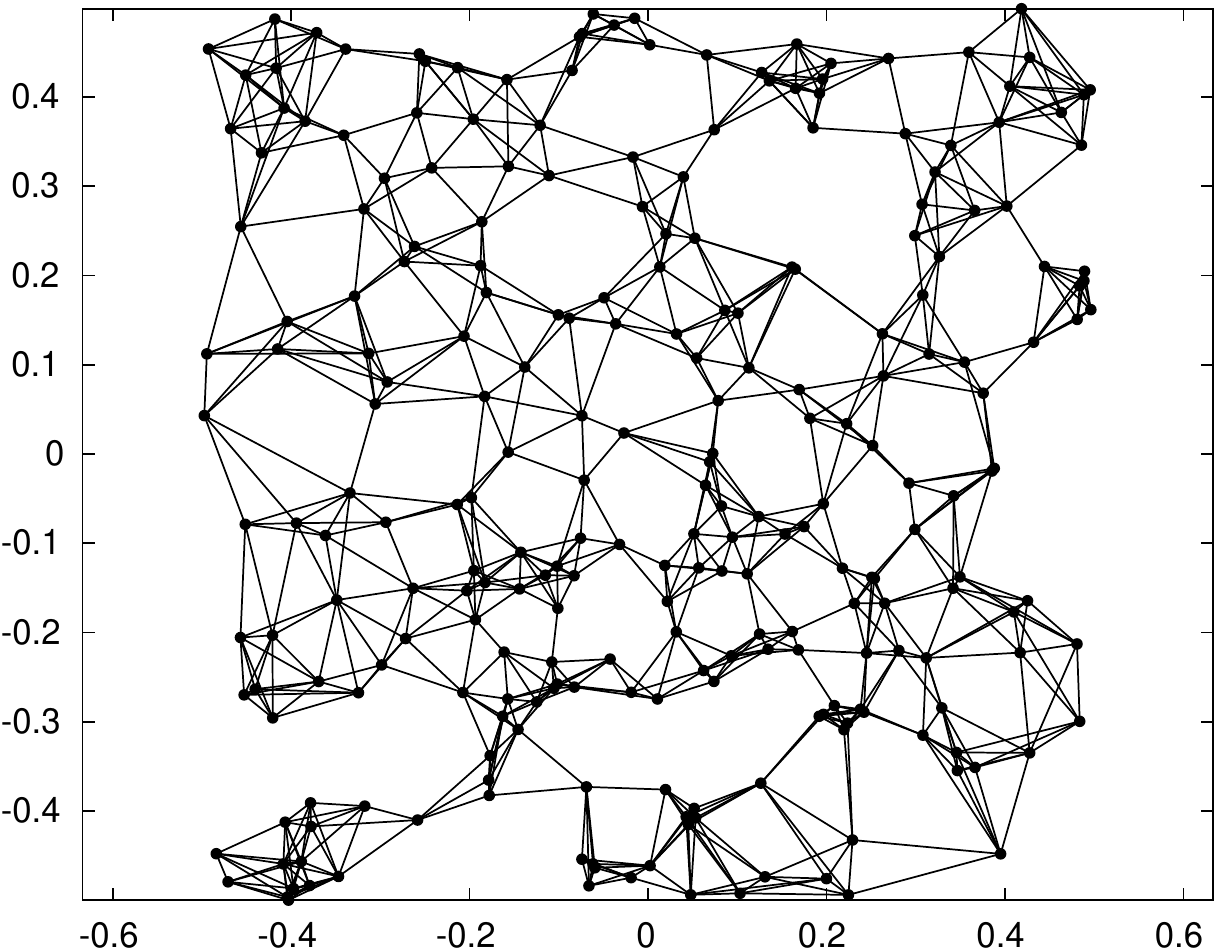} &
\includegraphics[width=.47\textwidth]{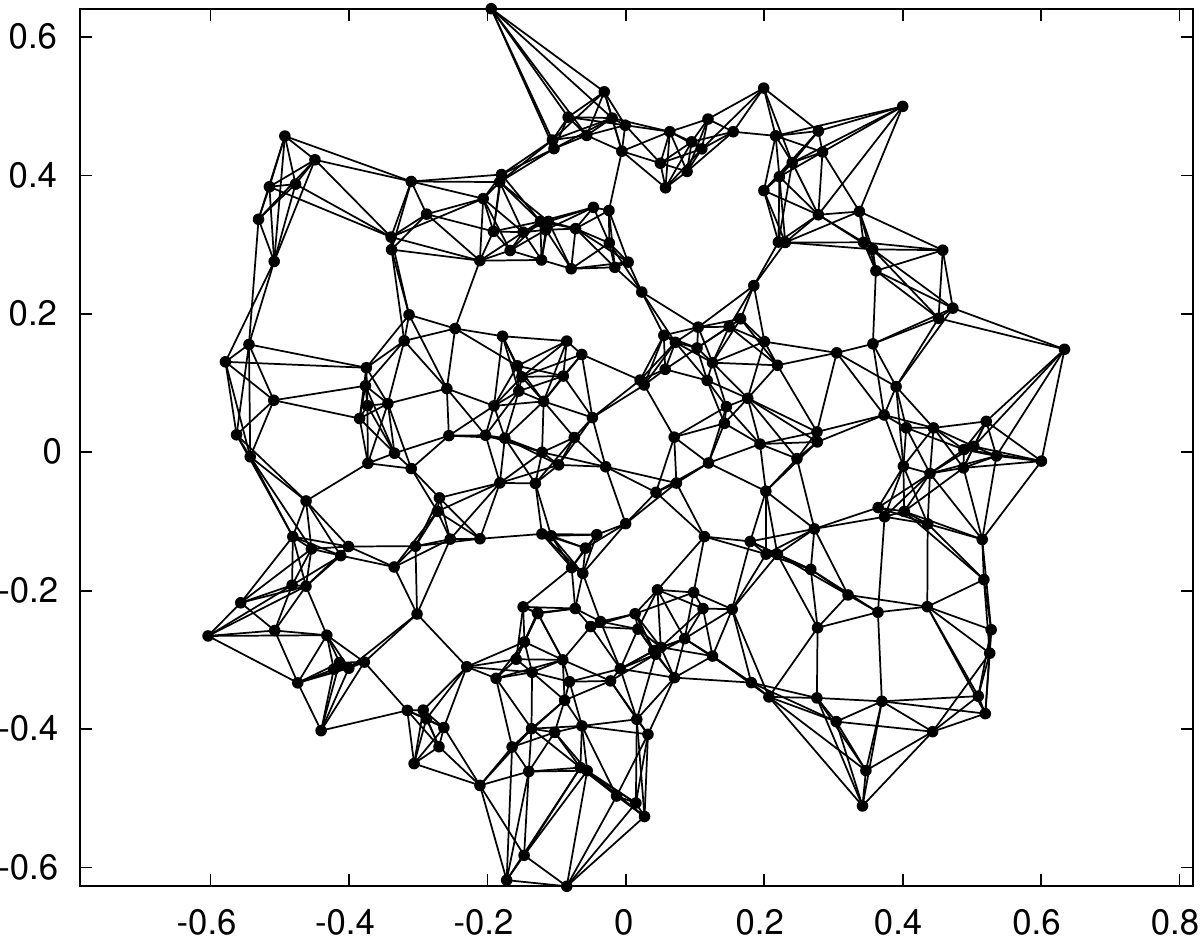} 
\end{tabular}
\caption{A 6-nearest neighbor graph of points in the plane
(left), and a 6-nearest neighbor graph for the same points after each point has
been independently randomly rotated about the origin (right). As the number
of points $n \rightarrow \infty$, commonly smooth functions $f : V \rightarrow
\mathbb{R}$ are functions of the distance to the origin.
}  \label{fig01}
\end{figure}

\subsection{Motivating example}
A geometric example is shown in Figure \ref{fig01}: we are given a set of $n$
uniformly random points in the unit square centered at the origin, and  form a
graph  $G_1 = (V,E_1)$ by connecting each point to its 6-nearest neighbors with
respect to Euclidean distance.  A second  graph $G_2 = (V,E_2)$ is built on the
same set of points as follows: each point is randomly rotated about
the origin (by independent uniformly random rotations), and the rotated points
are  connected to  their 6-nearest neighbors (see \S \ref{rot2} for a precise
description of this example). Two vertices $u$ and $v$ are close in the graph
$G_1$ if the underlying points are physically close in the plane.  Likewise,
$u$ and $v$ are close in the graph $G_2$ if the rotated version of
the underlying points are close.  It becomes clear that any commonly smooth
function $f:V \rightarrow \mathbb{R}$ must be close to a function that only
depends on the distance of the underlying points to the origin 
(in the usual sense as the number of points $n$ becomes large).
How can we detect these `commonly smooth functions' or `common variables' 
if we do not have access to how the
graphs were constructed?  How can we find them from the graph data alone?

\subsection{Problem statement}
Suppose that $\mathcal{G} = (V, E_k)_{k=1}^m$ is a collection of $m$ graphs on a common set of $V$ vertices.
We address two main problems.
\begin{itemize}
\item Is it possible to detect whether there is a nonconstant commonly `smooth' function on
the vertices $V$ (that is smooth with respect to all $m$ graphs)?
\item Can we determine the `smoothest' nonconstant function on $V$ with respect to the collection of graphs $\mathcal{G}$?
\end{itemize}

The precise nature of these questions will strongly depend on the notion of `smoothness' of a 
function $f :V \rightarrow \mathbb{R}$. The main purpose of our paper is
to define a notion of smoothness inspired by Spectral Graph Theory and to provide an
approach which provably solves both problems in the regime where there truly is a common smooth
variable shared by all graphs in a certain precise sense. What we observe in practice is that the 
method is more broadly applicable. We emphasize that the underlying question could be formalized in many different ways (possibly leading to very different mathematics) and many of them might be interesting.

\subsection{Related results.} The problem of determining a commonly smooth
function for a collection of graphs  appears in  different contexts, perhaps
most frequently in data synthesis. Consider a data synthesis problem where
a fixed set of data points is measured in different ways 
(a multi-view problem). Each measurement of the  data points is encoded in a
graph $G_k = (V,E_k)$ whose vertices $V$ are the fixed data points and whose 
edges $E_k$ are determined by the specific measurement.
The end goal is to synthesize this data to extract intrinsic information. In
particular, is there a common variable (function on $V$) that is related 
to how connections between the data points are formed across all of the graphs?
This is a well-studied problem, we refer to 
\cite{belkin, cai, coif1, coif2, dong, liu, kumar, lederman, tang, yeredor} 
and references therein. We especially emphasize three papers. Ma and Lee
\cite{ma} propose working with a sum of Laplacians -- this is similar to our
approach except for the scaling which is crucial (see below for a longer
discussion).  Eynard, Kovnatsky, Bronstein, Glashoff, and Bronstein \cite{eynard}
also work within a spectral framework, and discuss the problem of simultaneous
diagonalization of Laplacians which is philosophically related to our approach.
Yair, Dietrich, Mulayoff, Talmon, and Kevrekidis \cite{yair} use the same
perspective on smoothness as we will (indeed, their paper directly inspired
ours) -- they compute smooth functions on each graph and then look for vectors
having large correlation with the subspaces of smooth functions.

\subsection{Preliminaries} \label{prelim}
Suppose that $G$ is an undirected connected weighted graph on $n$ vertices
described by an $n \times n$ symmetric nonnegative adjacency matrix
$\mathbf{A}$. We use the notion of a graph Laplacian $\mathbf{L}:\mathbb{R}^{n}
\rightarrow \mathbb{R}^{n}$ throughout the paper; we assume that $\mathbf{L}$
is symmetric, positive semi-definite, and has eigenvalue $0$ (of
 multiplicity $1$)
corresponding to the eigenvector $\mathbf{1}$.
An example of such an
operator is the graph Laplacian
\begin{equation} \label{graphLaplacian}
\mathbf{L} = \mathbf{D} - \mathbf{A},
\end{equation}
where $\mathbf{D}$ is the diagonal matrix whose $i$-th diagonal element $d_{ii}$ is the degree
of the vertex $i \in V$.  We use the notation
$$
0 = \lambda_0(\mathbf{L}) < \lambda_1(\mathbf{L}) \le \cdots \le
\lambda_{n-1}(\mathbf{L}),
$$
to denote the eigenvalues of $\mathbf{L}$, and 
$$
\mathbf{1} = \boldsymbol{\psi}_0, \boldsymbol{\psi}_1,\ldots, \boldsymbol{\psi}_{n-1},
$$
to denote the corresponding eigenvectors which we assume are normalized (so that their $\ell^2$-norm is $1$). When $\mathbf{L}$ is given by \eqref{graphLaplacian}, 
its associated quadratic form can be expressed by
\begin{equation} \label{quadform}
\mathbf{x}^\top \mathbf{L} \mathbf{x} = \sum_{(u,v) \in E} a_{u v} (x_u - x_v)^2,
\end{equation}
where $a_{uv}$ is the weight associated with the edge $(u,v)$. In spectral graph theory, this 
quadratic form is a standard way to measure  the smoothness of a function on a graph. 
In order to use this quadratic form as a smoothness score, 
we restrict our attention to the set $X$ of vectors with mean zero and unit length
$$
X = \{ \mathbf{x} \in \mathbb{R}^n : \mathbf{1}^\top \mathbf{x} = 0 \text{ and }
\mathbf{x}^\top \mathbf{x} =1\}.
$$   
Restricting our attention to $X$ is important since it avoids
trivially smooth functions on the vertices of a graph such as functions 
with a large constant component or functions with a very small norm.
We define a smoothness score $s_{\mathbf{L}} : X \rightarrow [1,\infty)$
by
\begin{equation} \label{smoothness}
s_{\mathbf{L}}(\mathbf{x}) = \frac{1}{\lambda_1(\mathbf{L})}
\mathbf{x}^\top \mathbf{L} \mathbf{x}.
\end{equation}
This normalization ensures that
$
s_{\mathbf{L}}(\mathbf{x}) \ge 1
$
with equality if and only if $\mathbf{x}$ is an eigenvector 
of $\mathbf{L}$  of eigenvalue $\lambda_1(\mathbf{L})$. The reason that normalizing 
$s_\mathbf{L}$ is important, is that
we are going to compare smoothness scores of a given function across different graphs. 
Dividing by $\lambda_1(\mathbf{L})$ is just one reasonable method of normalization; for some 
applications it may be advantageous to normalize $s_{\mathbf{L}}$ differently,  
see Remark \ref{othernorm}. The presented results hold for these alternate normalization strategies, 
as well as more general definitions of $s_{\mathbf{L}}$ whose discussion is delayed until later
in the paper to simplify the exposition, see Remark \ref{gendef}

\subsection{Main results} \label{collection}

Suppose that $\mathcal{G} = \{G_1,\ldots,G_m\}$ is a collection of undirected
connected weighted graphs on a common set of $n$ vertices $V$.
Informally speaking, a common variable for
$\mathcal{G}$ is a function defined on the vertices $V$ which is smooth with respect to the geometry of each graph. More precisely,
we can define a score $s_{\mathcal{G}}$ indicating how smooth (in the minimax sense) the smoothest function with respect to $\mathcal{G}$ is by
\begin{equation} \label{opt1}
s_\mathcal{G} = \min_{\mathbf{x} \in X} \max_{k \in
\{1,\ldots,m\}} s_{\mathbf{L}_k}(\mathbf{x}),
\end{equation}
where $s_\mathbf{L}$ is defined by \eqref{smoothness}.
The score $s_\mathcal{G}$ can be used to understand how much
`common information' is shared by the collection of graphs $\mathcal{G}$. If $\boldsymbol{\psi} \in X$ is an argument that minimizes \eqref{opt1}, then 
we call $\boldsymbol{\psi}$ the smoothest function 
with respect to $\mathcal{G}$ or a common variable of $\mathcal{G}$.

We can now present our main results. Theorem \ref{thm1} provides upper and
lower bounds on $s_\mathcal{G}$ in terms of (explicitly computable) spectral quantities:
we show that the smallest eigenvalue of suitably averaged Laplacians serves
as a lower bound. Theorem \ref{thm1} is complemented by Theorem \ref{thm2} which
shows that the lower and upper bounds are equal under an additional assumption,
and for a suitable choice of parameters. Numerical examples will show that they indeed coincide in practice.

\begin{theorem}[Upper and Lower Bounds] \label{thm1}
Let $\mathcal{G}$ be a collection of graphs satisfying the assumptions in 
\S \ref{prelim}.  For any $\mathbf{t} = (t_1,\ldots,t_m) \in T$, where
\begin{equation} \label{Tdefn}
T := \{ \mathbf{t} \in [0,1]^m : t_1 + \cdots + t_m = 1\},
\end{equation}
define the Laplacian
$\mathbf{L}_{\mathbf{t}}$ by the linear combination
\begin{equation} \label{Lteq}
\mathbf{L}_{\mathbf{t}} =   \sum_{k=1}^m
t_k\frac{ \mathbf{L}_k}{\lambda_1(\mathbf{L}_k)}.
\end{equation}
Then,
$$
\lambda_1(\mathbf{L}_{\mathbf{t}}) \le
 s_\mathcal{G} 
\le  
\max_{k \in \{1,\ldots,m\}} s_{\mathbf{L}_k}(
\boldsymbol{\psi}_1(\mathbf{L}_{\mathbf{t}}))
$$
where $\lambda_1(\mathbf{L}_{\mathbf{t}})$ denotes the second smallest eigenvalue 
of $\mathbf{L}_{\mathbf{t}}$, and 
$\boldsymbol{\psi}_1(\mathbf{L}_{\mathbf{t}})$ denotes a unit length eigenvector associated with $\lambda_1(\mathbf{L}_{\mathbf{t}})$.
\end{theorem}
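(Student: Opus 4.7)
My plan is to handle the two inequalities separately. The upper bound is immediate from testing a specific candidate, and the lower bound follows from a convex combination trick combined with the Courant--Fischer characterization of $\lambda_1$.

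For the upper bound, I observe that $\boldsymbol{\psi}_1(\mathbf{L}_\mathbf{t})$ lies in $X$: it has unit norm by assumption, and it is orthogonal to $\mathbf{1}$ because $\mathbf{L}_\mathbf{t} \mathbf{1} = \sum_k t_k \lambda_1(\mathbf{L}_k)^{-1} \mathbf{L}_k \mathbf{1} = \mathbf{0}$, so $\mathbf{1}$ is the eigenvector for the simple eigenvalue $0$ of $\mathbf{L}_\mathbf{t}$ and other eigenvectors are orthogonal to it. Since $s_\mathcal{G}$ is defined as a minimum over $X$, substituting $\boldsymbol{\psi}_1(\mathbf{L}_\mathbf{t})$ into the minimax expression gives exactly the claimed upper bound.

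For the lower bound, the key identity is that for any $\mathbf{x} \in X$,
\begin{equation*}
\sum_{k=1}^m t_k \, s_{\mathbf{L}_k}(\mathbf{x}) \;=\; \sum_{k=1}^m t_k \, \frac{\mathbf{x}^\top \mathbf{L}_k \mathbf{x}}{\lambda_1(\mathbf{L}_k)} \;=\; \mathbf{x}^\top \mathbf{L}_\mathbf{t}\, \mathbf{x}.
\end{equation*}
Since $\mathbf{t}$ lies in the simplex $T$, the left-hand side is a convex combination of the numbers $s_{\mathbf{L}_k}(\mathbf{x})$ and is therefore at most $\max_k s_{\mathbf{L}_k}(\mathbf{x})$. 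Meanwhile, because $\mathbf{L}_\mathbf{t}$ is symmetric positive semidefinite with $\mathbf{1}$ spanning its kernel (as noted above), the Courant--Fischer / Rayleigh quotient characterization gives $\mathbf{x}^\top \mathbf{L}_\mathbf{t}\, \mathbf{x} \ge \lambda_1(\mathbf{L}_\mathbf{t})$ for every $\mathbf{x} \in X$. Chaining these two inequalities yields $\max_k s_{\mathbf{L}_k}(\mathbf{x}) \ge \lambda_1(\mathbf{L}_\mathbf{t})$, and taking the minimum over $\mathbf{x} \in X$ delivers $s_\mathcal{G} \ge \lambda_1(\mathbf{L}_\mathbf{t})$.

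I do not anticipate any serious obstacle here; the argument is essentially a packaging of three standard facts (convex combinations bounded by the maximum, the Rayleigh quotient identity, and the kernel structure of each $\mathbf{L}_k$). The only point that requires mild care is verifying that the linear combination $\mathbf{L}_\mathbf{t}$ still has $\mathbf{1}$ as its lowest eigenvector with a simple zero eigenvalue, which is why restricting to $X$ continues to correspond to the Rayleigh quotient being bounded below by the \emph{second} eigenvalue rather than the first.
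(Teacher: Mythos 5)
Your proposal is correct and follows essentially the same route as the paper: the lower bound via the identity $\sum_k t_k s_{\mathbf{L}_k}(\mathbf{x}) = \mathbf{x}^\top \mathbf{L}_{\mathbf{t}}\mathbf{x}$, the bound of a convex combination by its maximum, and Courant--Fischer; the upper bound by using $\boldsymbol{\psi}_1(\mathbf{L}_{\mathbf{t}})$ as a test vector. Your explicit check that $\boldsymbol{\psi}_1(\mathbf{L}_{\mathbf{t}}) \in X$ (because $\mathbf{1}$ spans the kernel of $\mathbf{L}_{\mathbf{t}}$) is a small point the paper leaves implicit, but the argument is the same.
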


Theorem \ref{thm1} provides us with spectral upper and lower bounds on $ s_\mathcal{G} $. Theorem \ref{thm2} shows, assuming the first nontrivial eigenvalue is simple, that there is an explicit duality relation which
allows us to find the common variable by solving an eigenvalue optimization problem.

\begin{theorem}[Common Information Minimax Theorem] \label{thm2}
In addition to the hypothesis of Theorem \ref{thm1}, assume that 
$\lambda_1(\mathbf{L}_{\mathbf{t}})$ is always simple:
$$
 \lambda_2( \mathbf{L}_{\mathbf{t}}) > \lambda_1( \mathbf{L}_{\mathbf{t}}), \quad \text{for all} \quad
 \mathbf{t} \in T,
$$
where $T$ is defined by \eqref{Tdefn}. If
\begin{equation} \label{tstar}
\mathbf{t}^* = \argmax_{\mathbf{t} \in T}
\lambda_1(\mathbf{L}_{\mathbf{t}}),
\end{equation}
then
$$
\boldsymbol{\psi}_1(\mathbf{L}_{\mathbf{t^*}}) = 
 \argmin_{\mathbf{x} \in X} \max_{k \in \{1,\ldots,m\}} s_{\mathbf{L}_k}(\mathbf{x})  .
$$
\end{theorem}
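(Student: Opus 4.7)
The plan is to prove the theorem as a minimax duality statement by sandwiching $s_{\mathcal{G}}$ between the two bounds of Theorem \ref{thm1} and showing they coincide at $\mathbf{t}^*$. From Theorem \ref{thm1}, the lower bound $\lambda_1(\mathbf{L}_{\mathbf{t}}) \le s_\mathcal{G}$ holds for every $\mathbf{t} \in T$, hence $\lambda_1(\mathbf{L}_{\mathbf{t}^*}) = \max_{\mathbf{t} \in T} \lambda_1(\mathbf{L}_{\mathbf{t}}) \le s_\mathcal{G}$; the upper bound applied at $\mathbf{t}^*$ gives $s_\mathcal{G} \le \max_k s_{\mathbf{L}_k}(\boldsymbol{\psi}_1(\mathbf{L}_{\mathbf{t}^*}))$. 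So everything reduces to proving the single equality
$$
\max_{k \in \{1,\ldots,m\}} s_{\mathbf{L}_k}(\boldsymbol{\psi}_1(\mathbf{L}_{\mathbf{t}^*})) = \lambda_1(\mathbf{L}_{\mathbf{t}^*}).
$$
Because $\boldsymbol{\psi}_1(\mathbf{L}_{\mathbf{t}^*})$ lies in $X$ (it is orthogonal to $\mathbf{1}$ and has unit norm), this equality forces Theorem \ref{thm1}'s upper bound to be attained, identifying $\boldsymbol{\psi}_1(\mathbf{L}_{\mathbf{t}^*})$ as a minimizer.

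To prove the displayed equality I would invoke first-order perturbation theory combined with the KKT conditions for the simplex optimization in \eqref{tstar}. The simplicity hypothesis on $\lambda_1(\mathbf{L}_{\mathbf{t}})$ guarantees that $\mathbf{t} \mapsto \lambda_1(\mathbf{L}_{\mathbf{t}})$ is real analytic on $T$ with a smooth unit-length eigenvector, and the Hellmann–Feynman formula yields
$$
\frac{\partial \lambda_1(\mathbf{L}_{\mathbf{t}})}{\partial t_k} = \boldsymbol{\psi}_1(\mathbf{L}_{\mathbf{t}})^\top \frac{\mathbf{L}_k}{\lambda_1(\mathbf{L}_k)} \boldsymbol{\psi}_1(\mathbf{L}_{\mathbf{t}}) = s_{\mathbf{L}_k}\!\bigl(\boldsymbol{\psi}_1(\mathbf{L}_{\mathbf{t}})\bigr).
$$
Existence of a maximizer $\mathbf{t}^*$ follows from continuity of $\lambda_1$ and compactness of $T$. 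The KKT conditions for maximizing $\lambda_1(\mathbf{L}_{\mathbf{t}})$ over the simplex produce a Lagrange multiplier $\mu$ such that $s_{\mathbf{L}_k}(\boldsymbol{\psi}_1(\mathbf{L}_{\mathbf{t}^*})) = \mu$ whenever $t_k^* > 0$ and $s_{\mathbf{L}_k}(\boldsymbol{\psi}_1(\mathbf{L}_{\mathbf{t}^*})) \le \mu$ whenever $t_k^* = 0$. Taking the convex combination with weights $t_k^*$ gives
$$
\mu = \sum_{k=1}^m t_k^* s_{\mathbf{L}_k}\!\bigl(\boldsymbol{\psi}_1(\mathbf{L}_{\mathbf{t}^*})\bigr) = \boldsymbol{\psi}_1(\mathbf{L}_{\mathbf{t}^*})^\top \mathbf{L}_{\mathbf{t}^*} \boldsymbol{\psi}_1(\mathbf{L}_{\mathbf{t}^*}) = \lambda_1(\mathbf{L}_{\mathbf{t}^*}),
$$
while the KKT inequalities tell us $\max_k s_{\mathbf{L}_k}(\boldsymbol{\psi}_1(\mathbf{L}_{\mathbf{t}^*})) = \mu$. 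Combining the two identifies both sides of the desired equality.

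The main obstacle I anticipate is the careful justification of the differentiability of $\lambda_1(\mathbf{L}_{\mathbf{t}})$ and the applicability of KKT up to the boundary of $T$; this is exactly why the simplicity assumption is imposed globally on $T$ rather than merely at $\mathbf{t}^*$. One small bookkeeping point to check is that $\mathbf{L}_{\mathbf{t}}$ inherits the structural properties of \S\ref{prelim}: it is symmetric positive semi-definite with $\mathbf{1}$ spanning its kernel (the kernel is exactly $\bigcap_k \ker \mathbf{L}_k = \mathrm{span}(\mathbf{1})$ since convex combinations of PSD operators have kernel equal to the intersection of kernels), so that $\lambda_1(\mathbf{L}_{\mathbf{t}})$ is well-defined and strictly positive, and the variational characterization $\lambda_1(\mathbf{L}_{\mathbf{t}}) = \min_{\mathbf{x} \in X} \mathbf{x}^\top \mathbf{L}_{\mathbf{t}} \mathbf{x}$ is valid. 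With these points in place the argument is essentially a strong-duality statement whose duality gap closes precisely because of the simplicity-driven smoothness of the spectral map.
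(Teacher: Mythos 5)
Your proposal is correct and takes essentially the same approach as the paper: the derivative formula you attribute to Hellmann--Feynman is exactly what Lemma \ref{lemg} establishes (via a Davis--Kahan perturbation argument, after eliminating $t_m$), and your KKT conditions at $\mathbf{t}^*$ are precisely the paper's interior/boundary case analysis. Both arguments then conclude identically: the active scores equal the convex combination $\lambda_1(\mathbf{L}_{\mathbf{t}^*})$ and dominate the inactive ones, and the sandwich from Theorem \ref{thm1} closes the gap.
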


The proofs of Theorems \ref{thm1} and \ref{thm2} are given in \S
\ref{proofmain}. In practice, these theorems can be used in
conjunction, see Remark \ref{useboth}. The
optimization problem \eqref{tstar} is straightforward to solve numerically using
standard methods, see \S \ref{numerical}.

\subsection{Diffusion geometry interpretation}
In the following, we describe how Theorems \ref{thm1} and \ref{thm2} can be interpreted in terms of 
diffusion geometry methods.  Given a symmetric positive semi-definite matrix
$\mathbf{L}$  which has eigenvalue $0$ (of multiplicity $1$) associated with the eigenvector
$\mathbf{1}$, we can
define a diffusion (or averaging) operator  $\mathbf{H}^\tau$ by
$$
\mathbf{H}^\tau = \exp(-\tau \mathbf{L}),
$$
where $\exp(\mathbf{A}) = \mathbf{I} + \mathbf{A} + \frac{1}{2!} \mathbf{A} +
\cdots $ is the matrix exponential and $\tau > 0$ plays the role of time. If $\mathbf{L}$ has eigenvalues
$$
0 = \lambda_0(\mathbf{L}) < \lambda_1(\mathbf{L}) \le \cdots \le
\lambda_{n-1}(\mathbf{L}),
$$
then by the spectral mapping theorem $\mathbf{H}^\tau$ has eigenvalues
\begin{equation} \label{spectral}
1 = e^{-\tau \lambda_0(\mathbf{L)}} > e^{-\tau \lambda_1(\mathbf{L})} \ge \cdots
\ge e^{-\tau \lambda_{n-1}(\mathbf{L})}.
\end{equation}
The following corollary is immediate from Theorem \ref{thm2} and \eqref{spectral}.

\begin{corollary}[Diffusion interpretation] \label{cor1}
Under the hypothesis of Theorem \ref{thm2}, define the diffusion operator $\mathbf{H}_{\mathbf{t}}^\tau$ by
$$
\mathbf{H}_{\mathbf{t}}^\tau = \exp \left(  -\tau \sum_{k=1}^m t_k
\frac{\mathbf{L}_k }{\lambda_1(\mathbf{L}_k)}  \right).
$$
Let
$$
\mathbf{t}^* = (t_1^*,\ldots,t_m^*) = \argmin_{\mathbf{t} \in T}
\mu_1(\mathbf{H}_{\mathbf{t}}^\tau),
$$
where $\mu_1(\mathbf{H}_{\mathbf{t}}^\tau)$ is the second largest eigenvalue of
$\mathbf{H}_{\mathbf{t}}^\tau$, and $T$ is defined in \eqref{Tdefn}. Then,
$$
\boldsymbol{\varphi}_1(\mathbf{H}_{\mathbf{t}^*}^\tau) 
= \argmin_{\mathbf{x} \in X} \max_{k \in \{1,\ldots,m\}} s_{\mathbf{L}_k}(\mathbf{x}),
$$
where $\boldsymbol{\varphi}_1(\mathbf{H}_{\mathbf{t}^*}^\tau)$ denotes a unit length
eigenvector associated with  $\mu_1(\mathbf{H}_{\mathbf{t}^*}^\tau)$. 
\end{corollary}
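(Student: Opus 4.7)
The plan is to deduce the statement directly from Theorem \ref{thm2} via the spectral mapping theorem. The key observation is that the exponent appearing in $\mathbf{H}_{\mathbf{t}}^\tau$ is precisely the Laplacian $\mathbf{L}_{\mathbf{t}}$ introduced in \eqref{Lteq}, so $\mathbf{H}_{\mathbf{t}}^\tau = \exp(-\tau \mathbf{L}_{\mathbf{t}})$.

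First I would invoke the spectral mapping theorem to conclude that $\mathbf{H}_{\mathbf{t}}^\tau$ has the same eigenvectors as $\mathbf{L}_{\mathbf{t}}$, with eigenvalues $e^{-\tau \lambda_i(\mathbf{L}_{\mathbf{t}})}$ as recorded in \eqref{spectral}. Because $x \mapsto e^{-\tau x}$ is strictly decreasing for $\tau > 0$, the ordering of the spectrum is reversed: the largest eigenvalue of $\mathbf{H}_{\mathbf{t}}^\tau$ is $1$ with eigenvector $\mathbf{1}$, and the second largest eigenvalue is $\mu_1(\mathbf{H}_{\mathbf{t}}^\tau) = e^{-\tau \lambda_1(\mathbf{L}_{\mathbf{t}})}$, attained at the same eigenvector $\boldsymbol{\psi}_1(\mathbf{L}_{\mathbf{t}})$. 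This identification yields $\boldsymbol{\varphi}_1(\mathbf{H}_{\mathbf{t}}^\tau) = \boldsymbol{\psi}_1(\mathbf{L}_{\mathbf{t}})$ (up to an irrelevant sign).

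Next, strict monotonicity of $x \mapsto e^{-\tau x}$ implies that minimizing $\mu_1(\mathbf{H}_{\mathbf{t}}^\tau)$ over $\mathbf{t} \in T$ is equivalent to maximizing $\lambda_1(\mathbf{L}_{\mathbf{t}})$ over $\mathbf{t} \in T$. Therefore the optimizer $\mathbf{t}^*$ defined in the corollary coincides with the $\mathbf{t}^*$ defined by \eqref{tstar} in Theorem \ref{thm2}. Combining this with the eigenvector identification above gives
$$
\boldsymbol{\varphi}_1(\mathbf{H}_{\mathbf{t}^*}^\tau) = \boldsymbol{\psi}_1(\mathbf{L}_{\mathbf{t}^*}),
$$
and a direct appeal to Theorem \ref{thm2} concludes the argument.

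There is essentially no substantial obstacle: the corollary is a translation of Theorem \ref{thm2} under the monotone change of variables $\lambda \mapsto e^{-\tau \lambda}$. The only bookkeeping is to note that the simplicity hypothesis inherited from Theorem \ref{thm2} ensures $\boldsymbol{\varphi}_1(\mathbf{H}_{\mathbf{t}^*}^\tau)$ is well defined up to sign, and that continuity of $\mathbf{t} \mapsto \lambda_1(\mathbf{L}_{\mathbf{t}})$ on the compact simplex $T$ guarantees the argmin is attained.
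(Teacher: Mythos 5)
Your proof is correct and matches the paper's reasoning: the paper states the corollary is immediate from Theorem \ref{thm2} together with the spectral mapping relation \eqref{spectral}, which is exactly the monotone change of variables $\lambda \mapsto e^{-\tau\lambda}$ you spell out. Your write-up simply makes explicit the eigenvector identification and the equivalence of the argmin/argmax that the paper leaves implicit.
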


This corollary, which rewrites Theorem \ref{thm2} in terms of a diffusion operator,
has several interesting consequences. 

\begin{enumerate}[\indent 1)]
\item The parameters $\mathbf{t}^* = (t_1^*,\ldots,t_m^*)$ can be interpreted as optimally 
tuned diffusion times for the graphs $G_1,\ldots,G_m$. The operator
$\mathbf{H}_{\mathbf{t}^*}^\tau$ uncovers common information from the graphs by 
optimally diffusing on these graphs at different rates.
\item The operator $\mathbf{H}_{\mathbf{t}^*}^\tau$ can be used to define a
diffusion distance on the common set of vertices $V$ on which the graphs
$\mathcal{G}$ are defined. Assume that $V = \{1,\ldots,n\}$. We can define the diffusion 
distance $D_{\mathbf{t}*}^\tau : V \times V \rightarrow \mathbb{R}$ by
$$
D_{\mathbf{t}^*}^\tau(i,j) = \| \mathbf{H}_{\mathbf{t}^*}^\tau \boldsymbol{\delta}_i -
\mathbf{H}_{\mathbf{t}^*}^\tau \boldsymbol{\delta}_j \|_{\ell^2},
$$
where $\boldsymbol{\delta}_i$ is the column vector whose
$i$-th entry is $1$ and other entries are $0$.
\item For any chosen dimension $d \ge 1$,
the operator
$\mathbf{H}_{\mathbf{t}^*}^\tau$ can be used to define a diffusion map
$\Psi^\tau : V \rightarrow \mathbb{R}^d$ by 
$$
\Psi^\tau(j) = \left( \begin{array}{c} \mu_1(\mathbf{H}_{\mathbf{t}^*}^\tau)
\varphi_{1,j}(\mathbf{H}_{\mathbf{t}^*}^\tau) \\ \vdots \\ \mu_d(\mathbf{H}_{\mathbf{t}^*}^\tau)
\varphi_{d,j}(\mathbf{H}_{\mathbf{t}^*}^\tau)
\end{array} \right),
$$
where $\varphi_{i,j}(\mathbf{H}_{\mathbf{t}^*}^\tau)$ denotes
the $j$-th entry of the eigenvector
$\boldsymbol{\varphi}_i(\mathbf{H}_{\mathbf{t}^*}^\tau)$ associated with the
eigenvalue $\mu_i(\mathbf{H}_{\mathbf{t}^*}^\tau) =
e^{-\tau \lambda_i(\mathbf{L}_{\mathbf{t}^*})}$. 
\end{enumerate}

\begin{remark}[Using Theorems \ref{thm1} and \ref{thm2} in conjunction] \label{useboth}
In applications, the optimization problem \eqref{tstar} for $\mathbf{t}^*$ in
Theorem \ref{thm2} can be solved numerically using gradient based
optimization methods and Lemma \ref{lemg}. 
Suppose that $\tilde{\mathbf{t}}$ is the numerical solution to
\eqref{tstar}, and let  $\tilde{\boldsymbol{\psi}}_1$ denote a normalized
eigenvector corresponding to $\lambda_1(\mathbf{L}_{\tilde{\mathbf{t}}})$.
By Theorem \ref{thm1} we have the following error estimate:
\begin{equation} \label{errestt}
\left| \max_{k \in \{1,\ldots,m\}} s_{\mathbf{L}_k}(
\tilde{\boldsymbol{\psi}}_1)
-  s_\mathcal{G}\right|  \le 
 \max_{k \in \{1,\ldots,m\}} s_{\mathbf{L}_k}(
\tilde{\boldsymbol{\psi}}_1)
- \lambda_1( \mathbf{L}_{\tilde{\mathbf{t}}}).
\end{equation}
This inequality can be used to verify that the numerical optimization is successful. 
For each of our numerical examples
presented in \S \ref{numerical} we use \eqref{errestt} to verify that 
we are able to accurately solve each optimization problem (with error $\lesssim 10^{-6})$;
however, in practice \eqref{errestt} could also be used to stop the optimization process 
when the error is less than, say, $10^{-1}$, if that is sufficient for the application.
Furthermore, we note that since Theorem \ref{thm1} does not
require the eigenvalue multiplicity condition of
Theorem \ref{thm2} and Lemma \ref{lemg}, this error estimate can be used to
check that a candidate  common variable $\boldsymbol{\psi}_1$ determined using Theorem \ref{thm2} is close to optimal 
without having to verify that the spectral gap hypothesis of Theorem 
\ref{thm2} holds.
\end{remark}

\begin{remark}[Alternate normalization methods] \label{othernorm}
Recall that we defined the smoothness score $s_{\mathbf{L}} : X \rightarrow [1,\infty)$
by
$$
s_{\mathbf{L}}(\mathbf{x}) = \frac{1}{\lambda_1(\mathbf{L})}
\mathbf{x}^\top \mathbf{L} \mathbf{x},
$$
such that the `smoothest function' with respect to $s_{\mathbf{L}}$
has smoothness score 1.
In practice it may be advantageous to normalize
the quadratic form differently. For example, we could define
$a_{\mathbf{L}} : X \rightarrow (0,\infty)$  by
\begin{equation} \label{alpha2}
a_{\mathbf{L}}(\mathbf{x}) =  \left( \frac{1}{n-1} \sum_{j=1}^{n-1} \lambda_j(\mathbf{L}) \right)^{-1} \mathbf{x}^\top \mathbf{L} \mathbf{x},
\end{equation}
such that the `average smoothness' with respect to $a_{\mathbf{L}}$ is $1$. More precisely, with
this definition
we have
 $\mathbb{E}[ a_{\mathbf{L}}(\mathbf{x})] = 1$, where the
expectation is taken over $\mathbf{x}$ chosen uniformly at random from $X$.
Indeed,
by writing $\mathbf{x} = \sum_{j=1}^{n-1} c_j \boldsymbol{\psi}_j$, where
$(c_1,\ldots,c_{n-1})$ is chosen uniformly at random with respect to the surface measure on $ \mathbb{S}^{n-2}$ we have
$$
\mathbb{E}[ \mathbf{x}^\top \mathbf{L} \mathbf{x}]
=  \mathbb{E}\left[ \sum_{j=1}^{n-1} c_j^2
\lambda_j(\mathbf{L})  \right]
=  \sum_{j=1}^{n-1} \frac{\lambda_j(\mathbf{L})}{n-1}.
$$
Other methods of normalization are conceivable: one could, for
example, consider decreasing weights that put more emphasis on lower
frequencies. Our theoretical results are independent of the choice of
normalization. However, for applications the distinction
between choosing to normalize based on the `smoothest function' or `average
smoothness' (or some other intermediate 
normalization method) may be important; we provide such an example in \S \ref{spiraltorus}.

\end{remark}


\section{Proof of main results} \label{proofmain}
We start by proving Theorem \ref{thm1}. After that, we establish Lemma \ref{lemg} and use it to establish Theorem \ref{thm2}.
\subsection{Proof of Theorem \ref{thm1}}
\begin{proof}
Recall that
$$
X = \{ \mathbf{x} \in \mathbb{R}^n : \mathbf{1}^\top \mathbf{x} = 0 \text{ and }
\mathbf{x}^\top \mathbf{x} = 1\}.
$$ 
We have
$$
s_\mathcal{G} = \min_{\mathbf{x} \in X} \max_{k \in \{1,\ldots,m\}}
s_{\mathbf{L}_k}(\mathbf{x}) = 
\min_{\mathbf{x} \in X} \max_{\mathbf{t} \in T}
\sum_{k=1}^m t_k s_{\mathbf{L}_k}(\mathbf{x}),
$$
where 
$$
T = \left\{ \mathbf{t} = (t_1,\ldots,t_m) \in [0,1]^m : 
t_1 + \cdots + t_m = 1 \right\}.
$$
For any fixed $\mathbf{t} \in T$ (not depending on
$\mathbf{x}$) we have
$$
\min_{\mathbf{x} \in X} \max_{\mathbf{t} \in T} \sum_{k=1}^m t_k
s_{\mathbf{L}_k}(\mathbf{x}) \ge  \min_{\mathbf{x} \in X}
\sum_{k=1}^m t_k s_{\mathbf{L}_k}(\mathbf{x}) = 
\min_{\mathbf{x} \in X} \mathbf{x}^\top \mathbf{L}_\mathbf{t} \mathbf{x},
$$
where
$$
\mathbf{L}_\mathbf{t} = \sum_{k=1}^m \frac{t_k}{\lambda_1(\mathbf{L}_k)}
\mathbf{L}_k .
$$
By the Courant-Fischer Theorem
$$
 \min_{\mathbf{x} \in X} \mathbf{x}^\top \mathbf{L}_\mathbf{t}
\mathbf{x}  =  \lambda_1(\mathbf{L}_{\mathbf{t}}).
$$
In combination, the above inequalities give
$$
s_\mathcal{G}  \ge \lambda_1(\mathbf{L}_{\mathbf{t}}).
$$
Let $\boldsymbol{\psi}_1(\mathbf{L}_{\mathbf{t}})$ be a unit length
eigenvector corresponding to $\lambda_1(\mathbf{L}_\mathbf{t})$. 
By using this eigenvector as a test vector we have
$$s_\mathcal{G} = \min_{\mathbf{x} \in X} \max_{k \in
\{1,\ldots,m\}} s_{\mathbf{L}_k}(\mathbf{x}) \leq \max_{k \in\{1,\ldots,m\}} s_{\mathbf{L}_k}(\boldsymbol{\psi}_1(\mathbf{L}_{\mathbf{t}}))
$$
This completes the proof.
\end{proof}
\begin{lemma}[Gradient of eigenvalue] \label{lemg}
Suppose that the assumptions of Theorem \ref{thm2} hold.
Suppose that $\mathbf{t}' = (t_1,\ldots,t_{m-1})$ and assume $t_m := 1 - (t_1 + \ldots + t_{m-1})$. We have
$$
\nabla_{\mathbf{t}'} \lambda_1(\mathbf{L}_{\mathbf{t}}) = \left( 
\begin{array}{c}
\boldsymbol{\psi}_1(
\mathbf{L}_{\mathbf{t}})^\top \left( 
\frac{1}{\lambda_1(\mathbf{L}_1)}
\mathbf{L}_1 - 
 \frac{1}{\lambda_1(\mathbf{L}_m)}
\mathbf{L}_{m} \right)\boldsymbol{\psi}_1(\mathbf{L}_{\mathbf{t}}) \\
\vdots \\
\boldsymbol{\psi}_1(\mathbf{L}_{\mathbf{t}})^\top \left( 
\frac{1}{\lambda_1(\mathbf{L}_{m-1})}
\mathbf{L}_{m-1} - 
\frac{1}{\lambda_1(\mathbf{L}_{m})}
\mathbf{L}_{m} \right) \boldsymbol{\psi}_1(\mathbf{L}_{\mathbf{t}})
\end{array} \right).
$$
Moreover,  $\lambda_1(\mathbf{L}_{\mathbf{t}})$ is equal to $s_\mathcal{G}$ whenever the gradient vanishes.
\end{lemma}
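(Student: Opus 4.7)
The plan is to handle the gradient formula with first-order perturbation theory (Hellmann--Feynman) and then to identify $\lambda_1(\mathbf{L}_\mathbf{t})$ with $s_\mathcal{G}$ at critical points by sandwiching it between the upper and lower bounds of Theorem~\ref{thm1}.

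For the gradient, I would first use the constraint $t_m = 1 - t_1 - \cdots - t_{m-1}$ to write, for each $j \in \{1,\ldots,m-1\}$,
$$
\frac{\partial \mathbf{L}_\mathbf{t}}{\partial t_j}
= \frac{\mathbf{L}_j}{\lambda_1(\mathbf{L}_j)} - \frac{\mathbf{L}_m}{\lambda_1(\mathbf{L}_m)}.
$$
The simplicity assumption $\lambda_2(\mathbf{L}_\mathbf{t}) > \lambda_1(\mathbf{L}_\mathbf{t})$ across all of $T$ guarantees (by standard Rayleigh--Schrödinger perturbation theory for symmetric matrices, or equivalently the implicit function theorem applied to the characteristic polynomial) that $\lambda_1(\mathbf{L}_\mathbf{t})$ and a suitable sign choice of $\boldsymbol{\psi}_1(\mathbf{L}_\mathbf{t})$ depend smoothly on $\mathbf{t}$. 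The Hellmann--Feynman identity then gives
$$
\frac{\partial}{\partial t_j} \lambda_1(\mathbf{L}_\mathbf{t})
= \boldsymbol{\psi}_1(\mathbf{L}_\mathbf{t})^\top \frac{\partial \mathbf{L}_\mathbf{t}}{\partial t_j} \boldsymbol{\psi}_1(\mathbf{L}_\mathbf{t}),
$$
and combined with the previous display this is exactly the claimed formula. I would derive Hellmann--Feynman in a single line by differentiating $\mathbf{L}_\mathbf{t}\boldsymbol{\psi} = \lambda \boldsymbol{\psi}$ and $\boldsymbol{\psi}^\top \boldsymbol{\psi} = 1$ and contracting with $\boldsymbol{\psi}^\top$, so that the term involving the derivative of $\boldsymbol{\psi}$ cancels.

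For the second assertion, suppose the gradient vanishes at some $\mathbf{t}^\circ \in T$ and set $\boldsymbol{\psi} = \boldsymbol{\psi}_1(\mathbf{L}_{\mathbf{t}^\circ})$. Vanishing of every component of the gradient is equivalent to the statement that the normalized Rayleigh quotients $s_{\mathbf{L}_k}(\boldsymbol{\psi}) = \boldsymbol{\psi}^\top \mathbf{L}_k \boldsymbol{\psi}/\lambda_1(\mathbf{L}_k)$ are all equal to a common value $c$, since all differences $s_{\mathbf{L}_j}(\boldsymbol{\psi}) - s_{\mathbf{L}_m}(\boldsymbol{\psi})$ vanish. Using $\sum_k t_k^\circ = 1$,
$$
\lambda_1(\mathbf{L}_{\mathbf{t}^\circ})
= \boldsymbol{\psi}^\top \mathbf{L}_{\mathbf{t}^\circ}\boldsymbol{\psi}
= \sum_{k=1}^m t_k^\circ\, s_{\mathbf{L}_k}(\boldsymbol{\psi})
= c
= \max_{k \in \{1,\ldots,m\}} s_{\mathbf{L}_k}(\boldsymbol{\psi}).
$$
Theorem~\ref{thm1}, applied at $\mathbf{t} = \mathbf{t}^\circ$ with $\boldsymbol{\psi}_1(\mathbf{L}_{\mathbf{t}^\circ}) = \boldsymbol{\psi}$, yields $\lambda_1(\mathbf{L}_{\mathbf{t}^\circ}) \le s_\mathcal{G} \le \max_k s_{\mathbf{L}_k}(\boldsymbol{\psi})$, and the extremes of this chain coincide by the identity above, forcing $s_\mathcal{G} = \lambda_1(\mathbf{L}_{\mathbf{t}^\circ})$.

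There is no serious obstacle; the only step requiring care is justifying differentiability of $\lambda_1(\mathbf{L}_\mathbf{t})$ and $\boldsymbol{\psi}_1(\mathbf{L}_\mathbf{t})$, which is precisely why the simplicity hypothesis of Theorem~\ref{thm2} is invoked. Once that is in place, the gradient formula is an application of Hellmann--Feynman and the equality $s_\mathcal{G} = \lambda_1(\mathbf{L}_{\mathbf{t}^\circ})$ follows by combining the vanishing-gradient identity with Theorem~\ref{thm1}.
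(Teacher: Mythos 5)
Your proposal is correct, and it reaches the same key identity as the paper --- the derivative of the simple eigenvalue $\lambda_1(\mathbf{L}_{\mathbf{t}})$ equals $\boldsymbol{\psi}_1^\top (\partial \mathbf{L}_{\mathbf{t}}/\partial t_j)\boldsymbol{\psi}_1$ --- but by a somewhat different technical route. The paper establishes the formula by writing $\boldsymbol{\psi}_1(\mathbf{L}_{\mathbf{t}'+\varepsilon\mathbf{e}_j}) = c\,\boldsymbol{\psi}_1(\mathbf{L}_{\mathbf{t}'}) + \boldsymbol{\delta}$, invoking the Davis--Kahan theorem to get $\|\boldsymbol{\delta}\| = \mathcal{O}(\varepsilon)$ (using the uniform spectral gap over the compact set $T$), and then expanding the Rayleigh quotient of the perturbed matrix to first order in $\varepsilon$. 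You instead invoke smooth dependence of the simple eigenpair on $\mathbf{t}$ and run the one-line Hellmann--Feynman cancellation, which is cleaner and avoids the quantitative eigenvector perturbation bound; the cost is that you must cite (or prove) the differentiability of $\boldsymbol{\psi}_1(\mathbf{L}_{\mathbf{t}})$, including the sign choice, whereas the paper's argument only needs the Davis--Kahan angle estimate. Your treatment of the ``Moreover'' clause --- vanishing gradient forces $s_{\mathbf{L}_1}(\boldsymbol{\psi}_1) = \cdots = s_{\mathbf{L}_m}(\boldsymbol{\psi}_1)$, hence $\lambda_1(\mathbf{L}_{\mathbf{t}}) = \sum_k t_k s_{\mathbf{L}_k}(\boldsymbol{\psi}_1) = \max_k s_{\mathbf{L}_k}(\boldsymbol{\psi}_1)$, and the sandwich from Theorem \ref{thm1} collapses --- is exactly the argument the paper gives, except that the paper places it inside the proof of Theorem \ref{thm2} (interior critical point case) rather than in the proof of the lemma itself; including it where the lemma states it is if anything an improvement in organization.
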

\begin{proof}
Suppose that $\mathbf{t}' = (t_1,\ldots,t_{m-1})$ and assume $t_m := 1 -
(t_1+\ldots+t_{m-1})$. Under this assumption, we use the notation
$\mathbf{L}_{\mathbf{t}'} = \mathbf{L}_{\mathbf{t}}$ interchangeably.
We will prove that
$$
\frac{\partial}{\partial t_j} \lambda_1(\mathbf{L}_{\mathbf{t}'}) =
\boldsymbol{\psi}_1(\mathbf{L}_{\mathbf{t}'})^\top \left(
\frac{1}{\lambda_1(\mathbf{L}_j)}
\mathbf{L}_j - 
\frac{1}{\lambda_1(\mathbf{L}_m)}
\mathbf{L}_{m} \right) \boldsymbol{\psi}_1(\mathbf{L}_{\mathbf{t}'}),
$$
for $j \in \{1,\ldots,m-1\}$.
Let $\mathbf{e}_j \in \mathbb{R}^{m-1}$ be the $j$-th standard basis vector
(whose $j$-th entry is $1$ and 
other entries are $0$). For $\varepsilon > 0$ we have
$$
\lambda_1(\mathbf{L}_{\mathbf{t}' + \varepsilon \mathbf{e}_j }) = 
\boldsymbol{\psi}_1(\mathbf{L}_{\mathbf{t}' + \varepsilon \mathbf{e}_j })^\top 
(\mathbf{L}_{\mathbf{t}' + \varepsilon \mathbf{e}_j})
\boldsymbol{\psi}_1(\mathbf{L}_{\mathbf{t}' + \varepsilon \mathbf{e}_j }).
$$
By the definition of $\mathbf{L}_{\mathbf{t}'}$ we have
\begin{equation} \label{leps}
\mathbf{L}_{\mathbf{t}' + \varepsilon \mathbf{e}_j} = \mathbf{L}_{\mathbf{t}'} + \varepsilon 
\left( \frac{1}{\lambda_1(\mathbf{L}_j)} \mathbf{L}_j
- \frac{1}{\lambda_1(\mathbf{L}_m)} \mathbf{L}_m \right).
\end{equation}
We will now argue that it is possible to express the first normalized
eigenvector of the perturbed matrix $\mathbf{L}_{\mathbf{t}' + \varepsilon
\mathbf{e}_j}$ as a small perturbation of the first normalized eigenvector of
the unperturbed matrix $\mathbf{L}_{\mathbf{t}'}$. Using that all our
eigenvectors are defined to be normalized, we can write
$$
\boldsymbol{\psi}_1(\mathbf{L}_{\mathbf{t}' + \varepsilon \mathbf{e}_j})
= c \boldsymbol{\psi}_1(\mathbf{L}_{\mathbf{t}'}) + \boldsymbol{\delta},
$$
where $c$ is a coefficient and $\boldsymbol{\delta}$ is orthogonal to 
$\boldsymbol{\psi}_1(\mathbf{L}_{\mathbf{t}'})$. It remains to show that $c$ is
close to 1 or, equivalently, that $\boldsymbol{\delta}$ is small. For this, we
use the Davis-Kahan theorem. If $\theta$ denotes the angle between
$\boldsymbol{\psi}_1(\mathbf{L}_{\mathbf{t}'})$ and
$\boldsymbol{\psi}_1(\mathbf{L}_{\mathbf{t}'+ \varepsilon \mathbf{e}_j})$
then by the Davis-Kahan theorem
\begin{equation} \label{davisk}
 \sin(\theta) \le  \frac{2 \| 
\mathbf{L}_{\mathbf{t}' + \varepsilon \mathbf{e}_j} - 
\mathbf{L}_{\mathbf{t}'} \|}{\min_{i \not = 1} |\lambda_1( \mathbf{L}_{\mathbf{t}'})
- \lambda_i(\mathbf{L}_{\mathbf{t}'})|}.
\end{equation}
The denominator is uniformly bounded away from 0 as part of the assumptions of
Theorem \ref{thm2}, which are assumed to hold in the statement of the lemma.
Therefore, combining \eqref{leps} and \eqref{davisk} yields
$$
 \sin(\theta) \le  \mathcal{O}\left( \| 
\mathbf{L}_{\mathbf{t}' + \varepsilon \mathbf{e}_j} - 
\mathbf{L}_{\mathbf{t}'} \| \right) = \mathcal{O}(\varepsilon).
$$
We can now compute the cosine of $\theta$ via an inner product and obtain
$$ \cos{(\theta)} = \left\langle \boldsymbol{\psi}_1(\mathbf{L}_{\mathbf{t}'}), \boldsymbol{\psi}_1(\mathbf{L}_{\mathbf{t}' + \varepsilon \mathbf{e}_j}) \right\rangle = c.$$
Using
$$ 1= \| \boldsymbol{\psi}_1(\mathbf{L}_{\mathbf{t}' + \varepsilon \mathbf{e}_j}) \|^2 = c^2 + \|\boldsymbol{\delta}\|^2,$$
we arrive at
$$ \| \boldsymbol{\delta}\|^2 = 1 - c^2 = 1-\cos{(\theta)}^2 = \sin{(\theta)}^2 = \mathcal{O}(\varepsilon^2)$$
from which we deduce $\| \boldsymbol{\delta} \| = \mathcal{O}(\varepsilon)$. Using these identities, we can perform an expansion of
$\lambda_1(\mathbf{L}_{\mathbf{t}' + \varepsilon \mathbf{e}_j })$ up to order $\varepsilon$. We start by writing
$$
\lambda_1(\mathbf{L}_{\mathbf{t}' + \varepsilon \mathbf{e}_j }) = 
(c \boldsymbol{\psi}_1(\mathbf{L}_{\mathbf{t}'}) + \boldsymbol{\delta})^\top 
\left(
\mathbf{L}_{\mathbf{t}'} + \varepsilon 
\left( \frac{1}{\lambda_1(\mathbf{L}_j)} \mathbf{L}_j
- \frac{1}{\lambda_1(\mathbf{L}_m)} \mathbf{L}_m \right) \right)
(c \boldsymbol{\psi}_1(\mathbf{L}_{\mathbf{t}'}) + \boldsymbol{\delta}).
$$
Recalling that $c^2 = 1- \mathcal{O}(\varepsilon^2)$,  expanding the right hand side gives
\begin{multline*}
\lambda_1(\mathbf{L}_{\mathbf{t}' + \varepsilon \mathbf{e}_j }) = 
c^2 \boldsymbol{\psi}_1(\mathbf{L}_{\mathbf{t}'})^\top  
\mathbf{L}_{\mathbf{t}'}
 \boldsymbol{\psi}_1(\mathbf{L}_{\mathbf{t}'})
+ 2c \boldsymbol{\delta} \mathbf{L}_{\mathbf{t}'} \boldsymbol{\psi}_1(\mathbf{L}_{\mathbf{t}'})
\\ +
\varepsilon 
\boldsymbol{\psi}_1(\mathbf{L}_{\mathbf{t}'})^\top  
\left( \frac{1}{\lambda_1(\mathbf{L}_j)} \mathbf{L}_j
- \frac{1}{\lambda_1(\mathbf{L}_m)} \mathbf{L}_m \right) 
\boldsymbol{\psi}_1(\mathbf{L}_{\mathbf{t}'}) + \mathcal{O}(\varepsilon^2)
\end{multline*}
The first term on the right hand side is equal to $\lambda_1(\mathbf{L}_\mathbf{t}') 
+ \mathcal{O}(\varepsilon^2)$, and the second term is equal to zero since
$\boldsymbol{\delta}$ is orthogonal  to the eigenvector
$\boldsymbol{\psi}_1(\mathbf{L}_\mathbf{t}')$. It follows that
$$
\frac{
\lambda_1(\mathbf{L}_{\mathbf{t}' + \varepsilon \mathbf{e}_j }) -
\lambda_1(\mathbf{L}_\mathbf{t}') }{\varepsilon} 
= 
\boldsymbol{\psi}_1(\mathbf{L}_{\mathbf{t}'})^\top  
\left( \frac{1}{\lambda_1(\mathbf{L}_j)} \mathbf{L}_j
- \frac{1}{\lambda_1(\mathbf{L}_m)} \mathbf{L}_m \right) 
\boldsymbol{\psi}_1(\mathbf{L}_{\mathbf{t}'}) + \mathcal{O}(\varepsilon).
$$
This argument works for all $j \in \{1,\ldots,m-1\}$ so the proof is complete.
\end{proof}

\subsection{Proof of Theorem \ref{thm2}}
\begin{proof}
Suppose that
$$
\mathbf{t}^* = \argmax_{\mathbf{t} \in T}
\lambda_1(\mathbf{L}_{\mathbf{t}}).
$$
We use the notation $\mathbf{t}' = (t_1,\ldots,t_{m-1})$ where $t_m := 1 - (t_1 + \cdots + t_{m-1})$.
First, consider the case where $\mathbf{t}^*$ is contained in the interior of $T$. 
In this case, 
$$
\nabla_{\mathbf{t}'} \lambda_1( \mathbf{L}_{\mathbf{t}} ) |_{ \mathbf{t} = \mathbf{t}^*} = 0.
$$
Thus, by Lemma \ref{lemg} we have
$$
\boldsymbol{\psi}_1(
\mathbf{L}_{\mathbf{t}^*})^\top \left( 
\frac{1}{\lambda_1(\mathbf{L}_k)}
\mathbf{L}_k - 
 \frac{1}{\lambda_1(\mathbf{L}_m)}
\mathbf{L}_{m} \right)\boldsymbol{\psi}_1(\mathbf{L}_{\mathbf{t}^*}) = 0,
$$
for $k \in \{1,\ldots,m-1\}$, since this equation can be equivalently written as
$$  
s_{\mathbf{L}_k}(
\boldsymbol{\psi}_1(\mathbf{L}_{\mathbf{t}^*})) - s_{\mathbf{L}_m}(
\boldsymbol{\psi}_1(\mathbf{L}_{\mathbf{t}^*})) = 0,
$$
it follows that all of these quadratic forms are equal:
$$
s_{\mathbf{L}_1}(\boldsymbol{\psi}_1(\mathbf{L}_{\mathbf{t}^*})) = \cdots =
s_{\mathbf{L}_m}(\boldsymbol{\psi}_1(\mathbf{L}_{\mathbf{t}^*})).
$$
Informally speaking, the smoothest function or common variable
$\boldsymbol{\psi}_1(\mathbf{L}_{\mathbf{t}^*})$ is
 indifferent between the different smoothness measures
$s_{\mathbf{L}_1},\ldots,s_{\mathbf{L}_m}$.
Thus, 
$$
\lambda_1(\mathbf{L}_{\mathbf{t}^*})
= \max_{k \in \{1,\ldots,m\}} s_{\mathbf{L}_k}(
\boldsymbol{\psi}_1(\mathbf{L}_{\mathbf{t}^*})).
$$
We recall Theorem \ref{thm1} states that
$$
\lambda_1(\mathbf{L}_{\mathbf{t}}) \le
 s_\mathcal{G} 
\le  
\max_{k \in \{1,\ldots,m\}} s_{\mathbf{L}_k}(
\boldsymbol{\psi}_1(\mathbf{L}_{\mathbf{t}})),
$$
from which we can conclude that
$$
\max_{k \in \{1,\ldots,m\}} s_{\mathbf{L}_k}(
\boldsymbol{\psi}_1(\mathbf{L}_{\mathbf{t}^*})) = s_\mathcal{G}.
$$
It remains to consider the case where $\mathbf{t}^*$ is not contained in the
interior of $T$.  Without loss of generality, suppose that $t_1^*=\cdots=t_p^*
=0$ and $t_{p+1}^*,\ldots,t_m^* \not = 0$. Suppose first that $p+1 \neq m$.
Then there are at least $2$ positive entries and, in particular, $0<t_m<1$. We
can thus apply Lemma \ref{lemg} and conclude that for 
$k \in \{p+1,\ldots, m\}$
$$
\boldsymbol{\psi}_1(
\mathbf{L}_{\mathbf{t}^*})^\top \left( 
\frac{1}{\lambda_1(\mathbf{L}_k)}
\mathbf{L}_k - 
 \frac{1}{\lambda_1(\mathbf{L}_m)}
\mathbf{L}_{m} \right)\boldsymbol{\psi}_1(\mathbf{L}_{\mathbf{t}^*}) = 0
$$
which, as above, is equivalent to,
\begin{equation} \label{eqp1}
s_{\mathbf{L}_k}(
\boldsymbol{\psi}_1(\mathbf{L}_{\mathbf{t}^*})) =  s_{\mathbf{L}_m}(
\boldsymbol{\psi}_1(\mathbf{L}_{\mathbf{t}^*})),
\quad \text{for} \quad k \in \{p+1,\ldots, m\}.
\end{equation}
If $p+1 = m$, then  \eqref{eqp1} holds trivially.
It remains to deal with the entries $t_1^*, \dots, t_{p}^*$ (which are all 0).
Fix $k \in \{1,\ldots,p\}$. Since $\boldsymbol{t}^*$ is maximal,
the derivative of $\lambda_1(\mathbf{L}_{\mathbf{t}'})$ at $\mathbf{t}' =
(t_1^*,\ldots,t_{m-1}^*)$ in the direction $\mathbf{e}_k$ must be negative 
and thus by Lemma \ref{lemg}
$$  
s_{\mathbf{L}_k}(
\boldsymbol{\psi}_1(\mathbf{L}_{\mathbf{t}^*})) \leq  s_{\mathbf{L}_m}(
\boldsymbol{\psi}_1(\mathbf{L}_{\mathbf{t}^*})).$$
It follows that
$$
\max_{k \in \{1,\ldots,m\}} s_{\mathbf{L}_k}(
\boldsymbol{\psi}_1(\mathbf{L}_{\mathbf{t}^*})) = 
\max_{k \in \{p+1,\ldots,m\}} s_{\mathbf{L}_k}(
\boldsymbol{\psi}_1(\mathbf{L}_{\mathbf{t}^*})) =  \lambda_1(\mathbf{L}_{\mathbf{t}^*}).
$$
Appealing to Theorem \ref{thm1} once more gives
$$
\lambda_1(\mathbf{L}_{\mathbf{t}^*}) \le
 s_\mathcal{G} 
\le  
\max_{k \in \{1,\ldots,m\}} s_{\mathbf{L}_k}(
\boldsymbol{\psi}_1(\mathbf{L}_{\mathbf{t}}^*)),
$$
we can conclude that
$$ \lambda_1(\mathbf{L}_{\mathbf{t}^*}) =
 s_\mathcal{G} 
=  
\max_{k \in \{1,\ldots,m\}} s_{\mathbf{L}_k}(
\boldsymbol{\psi}_1(\mathbf{L}_{\mathbf{t}^*})).
$$
This completes the proof.
\end{proof}

\section{Numerical examples} \label{numerical}

\subsection{The Laplacian} Our approach is completely general with respect to the underlying notion of Laplacian $\mathbf{L}$ and many different types of Laplacians could be used. 
We merely require that $\mathbf{L}$ is symmetric positive semi-definite, and that $\mathbf{L}$ has 
eigenvalue  $0$ of multiplicity $1$ (corresponding to constant functions). 
For the purpose of consistency, all examples will be computed using the 
bi-stochastic Laplacian which is defined as follows. Assume that
$\mathbf{A}$ is a symmetric non-negative weighted adjacency matrix with a positive main diagonal. 
By using the Sinkhorn-Kopp algorithm (see Lemma \ref{lemsink}) it is possible to determine a 
symmetric positive definite diagonal matrix $\mathbf{D}$ such that
$$
\mathbf{D}^{-1/2} \mathbf{A} \mathbf{D}^{-1/2} \mathbf{1} = \mathbf{1},
$$
where $\mathbf{1}$ denotes a column vector of ones. 
Given such a matrix $\mathbf{D}$ we define the bi-stochastic graph Laplacian
$\mathbf{L}$ by
$$
\mathbf{L} = \mathbf{I} - \mathbf{D}^{-1/2} \mathbf{A} \mathbf{D}^{-1/2},
$$
where $\mathbf{I}$ is the identity matrix.  The bi-stochastic graph Laplacian
can be viewed as the graph Laplacian of a graph whose weighted adjacency matrix
is  $\mathbf{D}^{-1/2} \mathbf{A} \mathbf{D}^{-1/2}$, and thus the bi-stochastic
graph Laplacian has the same properties as the graph Laplacian discussed in \S
\ref{prelim}. We refer to \S \ref{bistochastic} for more details on how to compute the
bi-stochastic Laplacian.

\subsection{Nearest neighbor graph definition}
Let $X = \{x_1,\ldots,x_n\}$ be a subset of $\mathbb{R}^d$. We
say that $N_k(x_j)$ is a set of $k$-nearest neighbors of $x_j$ in $X$ if
$N_k(x_j)$ is a subset of $X \setminus \{x_j\}$ consisting of $k$ points which
has the property
$$
\max_{x \in N_k(x_j)} \|x - x_j\| \le \min_{y \in X \setminus (N_k(x_j) \cup
\{x_j\})} \|y - x_j\|.
$$
We say that $G$ is a $k$-nearest neighbor graph for $X$ if its adjacency matrix $\mathbf{A}
= (a_{ij})$ satisfies
$$
a_{ij} = \left\{ \begin{array}{ll}
1 & \text{if } i=j, \\
1 & \text{if } x_i \in N_k(x_j) \text{ or } x_j \in N_k(x_i), \text{ and}\\
0 & \text{otherwise,}
\end{array} \right. 
$$
for $i,j = 1,\ldots,n$, and for some choice of $k$-nearest neighbors $N_k(x_1),\ldots,N_k(x_n)$.
Note that our definition of a $k$-nearest neighbor
graph includes self loops  for each vertex. This
assumption allows us to perform a bi-stochastic normalization of the adjacency
matrix. We note that assuming that a  graph has self loops is
 a common assumption when working with stochastic matrices on graphs since it
ensures these stochastic matrices are aperiodic.

\subsection{Independent rotations in two dimensions} \label{rot2}

Let $X_1= \{x_1,\ldots,x_n\}$ be a set of $n=250$ independent uniformly
random points from the unit square $[-1/2,1/2]^2$,
and $\theta_1,\ldots,\theta_n$ be independent uniformly random points from
$[0,2\pi)$. Set
$$
X_2 = \{ T_{\theta_1}(x_1),\ldots,T_{\theta_n}(x_n) \},
$$
where $T_\theta(x)$ denotes the rotation of $x$ by angle $\theta$ about the origin.
More precisely, if $x = (r \cos \phi, r \sin \phi)$, then
$T_\theta(x) = (r \cos(\phi + \theta), r \sin(\phi + \theta))$. 
To summarize, the set $X_2$ is created by rotating the points in $X_1$ 
about the origin with independent uniformly random rotations. Let $G_1$ and 
$G_2$ be $6$-nearest neighbor graphs of $X_1$ and $X_2$, respectively, 
see  Figure \ref{fig01}. For each graph $G_1$ and $G_2$ we construct the corresponding bi-stochastic graph Laplacians 
$\mathbf{L}_1$ and $\mathbf{L}_2$. Next we solve the optimization problem
$$
\mathbf{t}^* = \argmax_{\mathbf{t} \in T}
\lambda_1(\mathbf{L}_{\mathbf{t}}),
$$
where $\mathbf{L}_\mathbf{t}$ is defined in \eqref{Lteq}.
By setting $t_2 := 1-t_1$ we can optimize over $t_1 \in [0,1]$. 
To visualize this optimization problem, we plot 
$\lambda_1(\mathbf{L}_{\mathbf{t}})$ against $t_1$ in Figure \ref{fig02}.
\begin{figure}[h!]
\includegraphics[width=.35\textwidth]{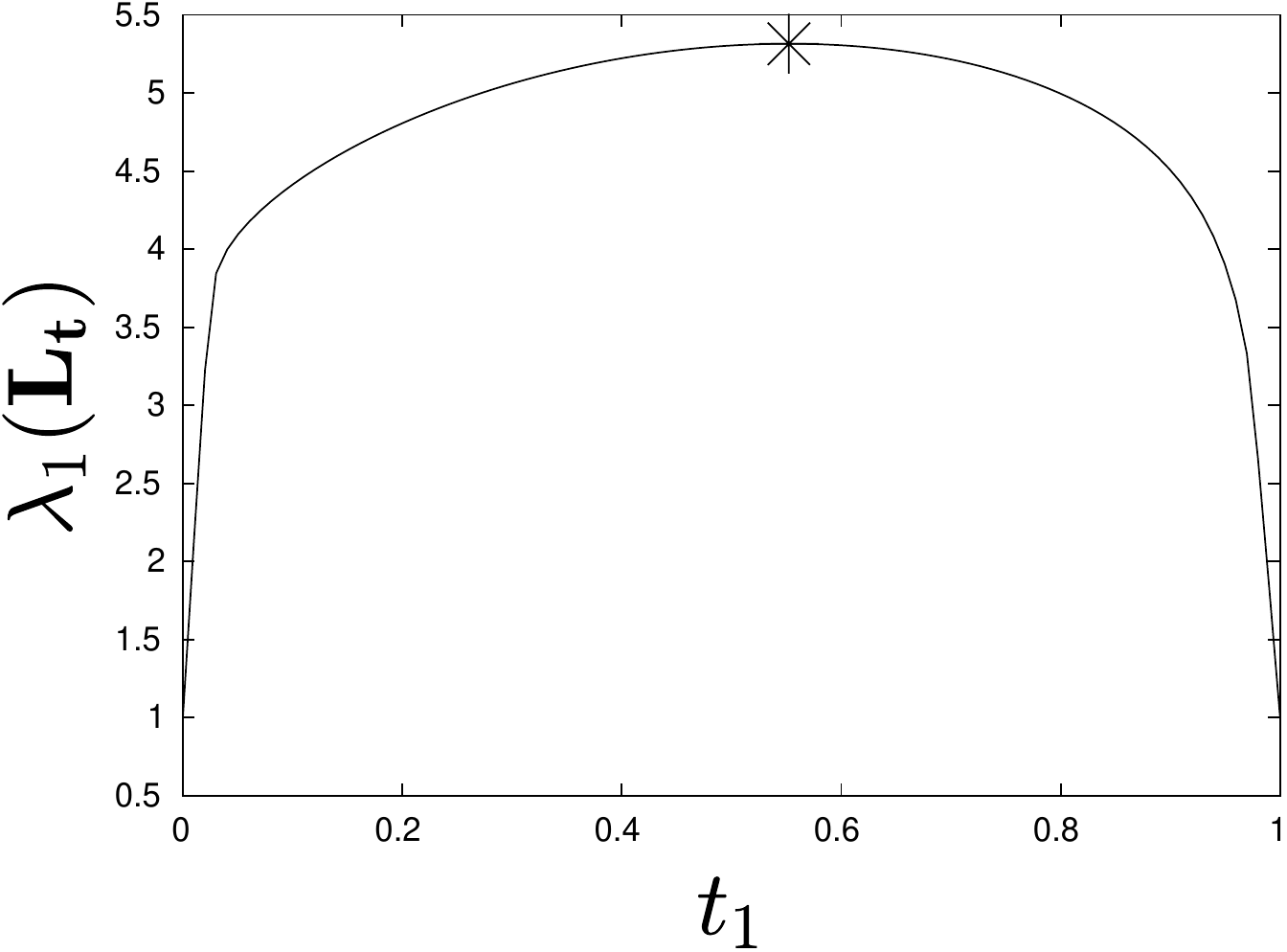} 
\caption{Parameter $t_1$ versus $\lambda_1(\mathbf{L}_{\mathbf{t}})$; the max
occurs at the star.}
\label{fig02}
\end{figure}

\noindent Using numerical optimization, we find that
$$
\mathbf{t}^* \approx (0.552330195903778,   0.447669804096222).
$$
Next, we use Theorem \ref{thm1} to validate the results of the optimization, which gives
$$
\left| \max_{k \in \{1,2\}} s_{\mathbf{L}_k}
(\boldsymbol{\psi}_1(\mathbf{L}_{\mathbf{t}^*})) - \lambda_1(\mathbf{L}_{\mathbf{t}^*})
\right|\le  8.802488427051003 \times 10^{-8},
$$
indicating that the optimization procedure was successful. Since we know how the graphs $G_1$ and $G_2$ were 
generated, we can further validate the method by checking that 
$\boldsymbol{\psi}_1(\mathbf{L}_{\mathbf{t}^*})$ is a smooth function
of the common variable that influences edge creation in both graphs (the
distance of a point  from the origin). We plot
$\boldsymbol{\psi}_1(\mathbf{L}_{\mathbf{t}^*})$ versus $r$ (representing the
distance of a point from the origin) in Figure \ref{fig03}.
\begin{figure}[h!]
\begin{tabular}{ccc}
\includegraphics[width=.28\textwidth]{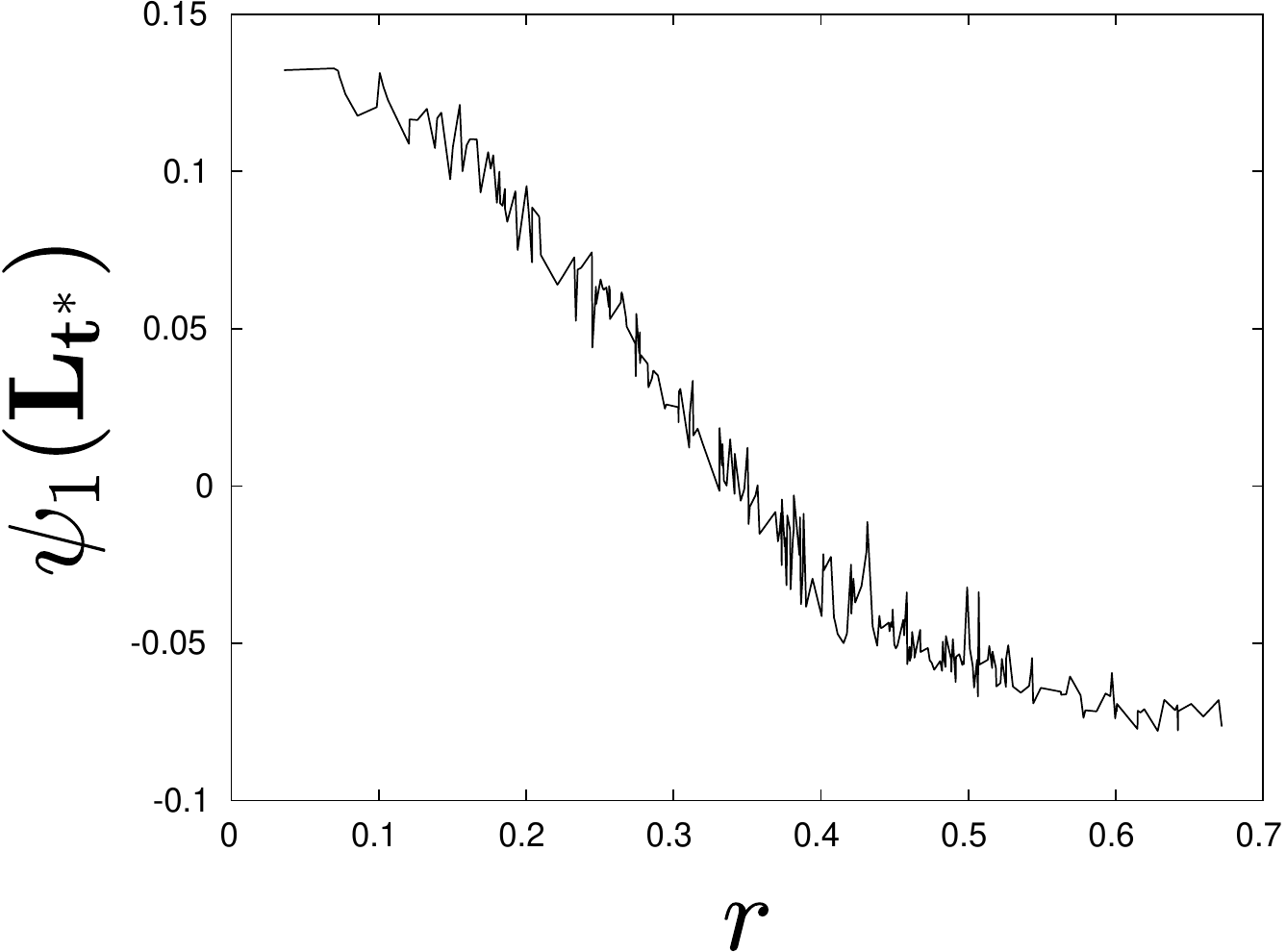} &
\includegraphics[width=.28\textwidth]{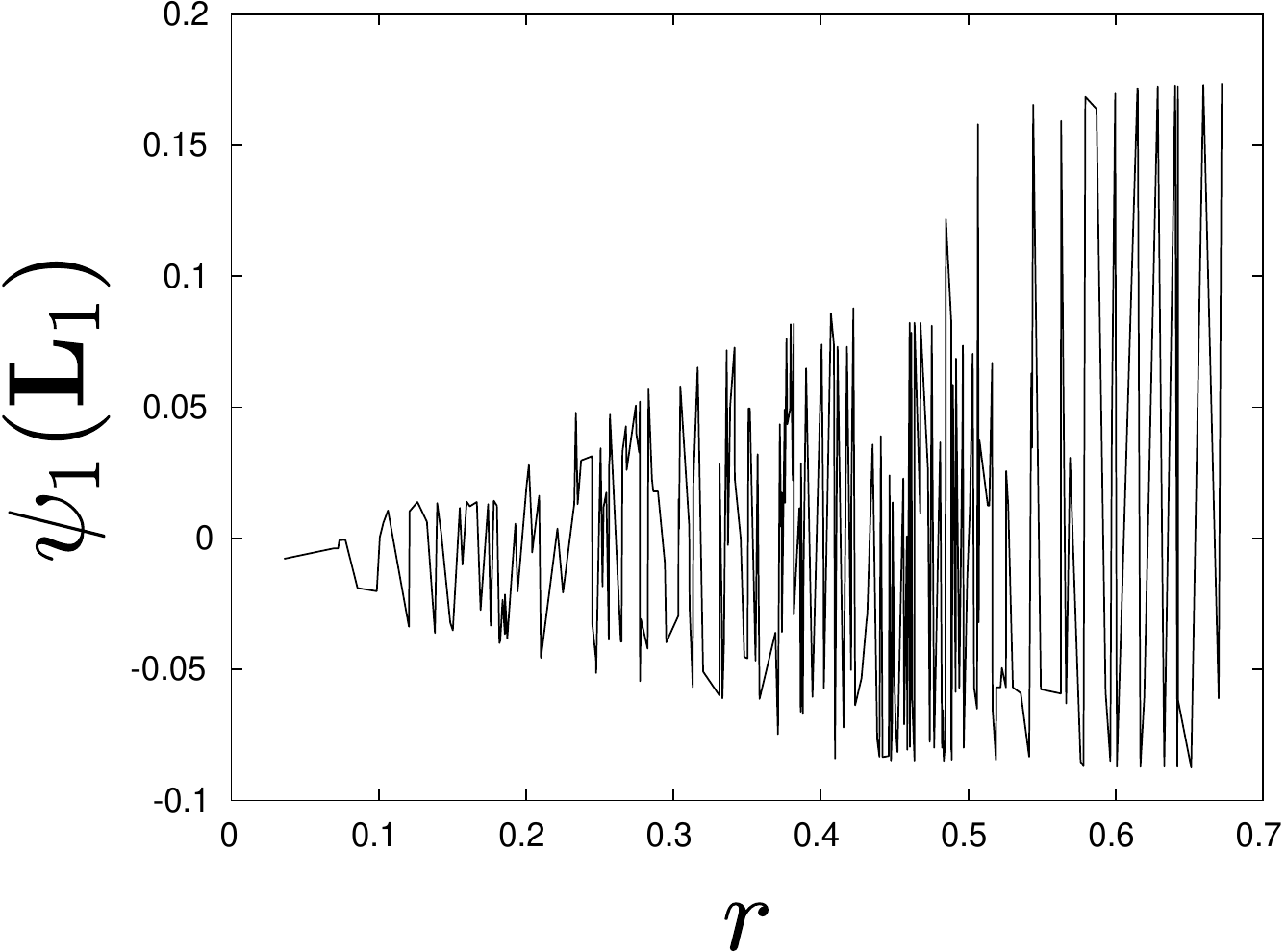} &
\includegraphics[width=.28\textwidth]{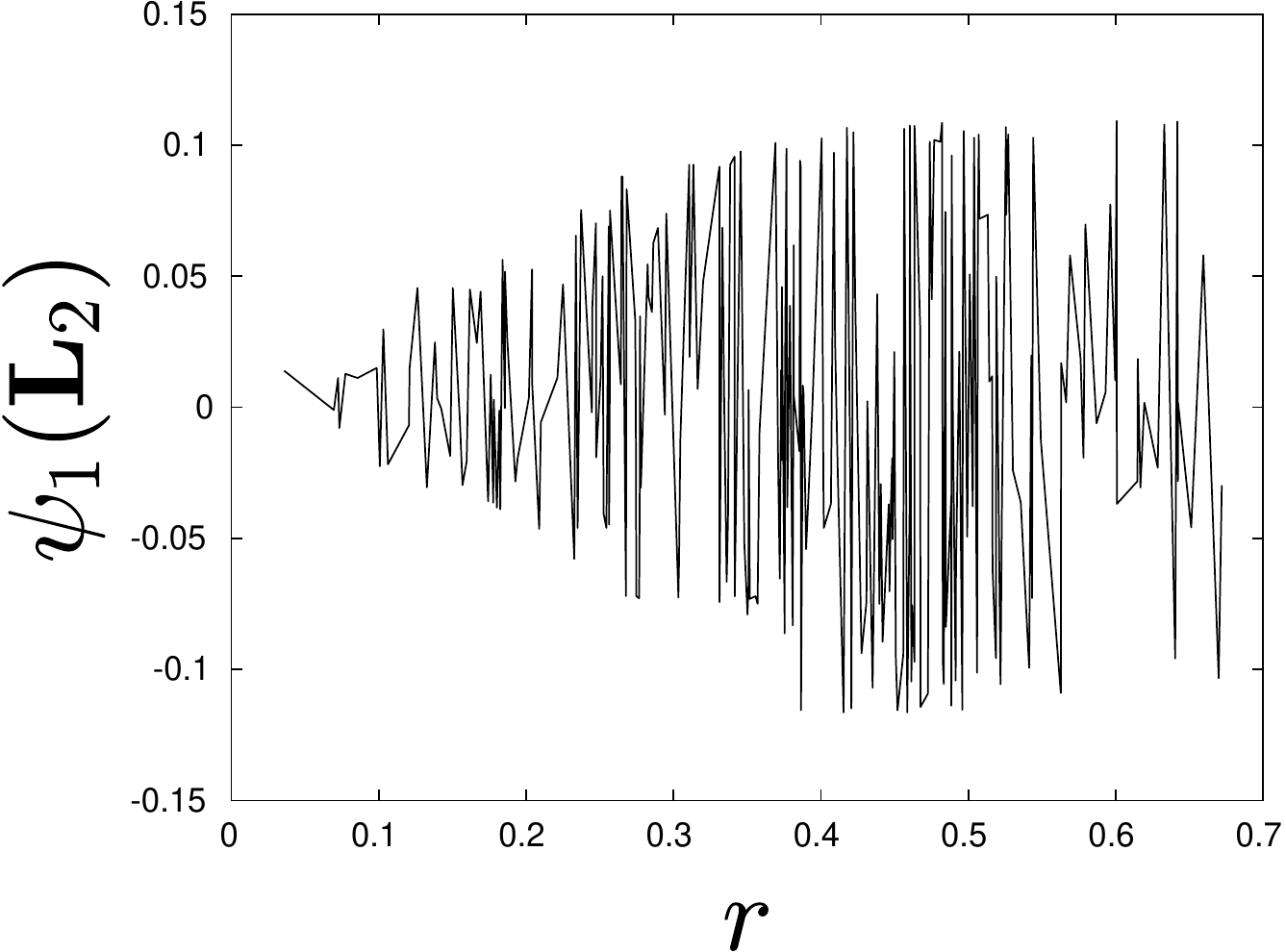} 
\label{fig03}
\end{tabular}
\caption{Plots of 
$\boldsymbol{\psi}(\mathbf{L}_{\mathbf{t}^*})$, $\boldsymbol{\psi}(\mathbf{L}_1)$, and $\boldsymbol{\psi}(\mathbf{L}_2)$ against $r$.}
\end{figure}

Observe that in Figure \ref{fig03} the common variable is essentially a
re-scaling of the distance to
the origin (as would be expected). 
For comparison, Figure \ref{fig03} also includes plots of $\boldsymbol{\psi}_1(\mathbf{L}_{1})$ and
$\boldsymbol{\psi}_1(\mathbf{L}_{2})$ versus $r$ to
demonstrate that neither of  them are smooth with respect to the common variable.

In the following section, we will present a similar example of building graphs from randomly 
rotated points except we start with points in three dimensions, and perform 
rotations around different axes to demonstrate how the method works when there are three 
graphs $G_1$, $G_2$, and $G_3$.

\subsection{Independent rotations in three dimensions}

Let $X_1= \{x_1,\ldots,x_n\}$ be $n=500$ independent uniformly
random points from the unit ball $\{x \in \mathbb{R}^3 : \|x\|_{\ell^2} \le 1\}$, and
let $\theta_1,\ldots,\theta_n$ and $\phi_1,\ldots,\phi_n$ be independent uniformly 
random angles from $[0,2\pi)$. Set
$$
X_2 = \{ T_{\theta_1}(x_1),\ldots,T_{\theta_n}(x_n)\},
$$
where $T_{\theta_j}(x)$ is a rotation about the $z$-axis by angle $\theta$: if
$x = (r \cos \theta, r \sin \theta, z)$, then
$$
T_{\theta_j}(x) = (r \cos(\theta+\theta_j), r \sin(\theta+\theta_j), z),
$$
and set
$$
X_3 = \{ S_{\phi_1}(x_1),\ldots,S_{\phi_n}(x_n)\},
$$
where $S_{\phi_j}(x)$ is a rotation about the $y$-axis: 
if $x = (r\cos \phi, y, r \sin \phi)$, then
$$
S_{\phi}(x) = (r \cos(\phi+\phi_j), y, r \sin(\phi+\phi_j)).
$$
We construct  $6$-nearest neighbor graphs $G_1$, $G_2$ and $G_3$ from the sets
 $X_1$, $X_2$, and $X_3$, respectively, see Figure \ref{fig04}.

\begin{figure}[h!]
\centering
\begin{tabular}{ccc}
\includegraphics[width=.3\textwidth]{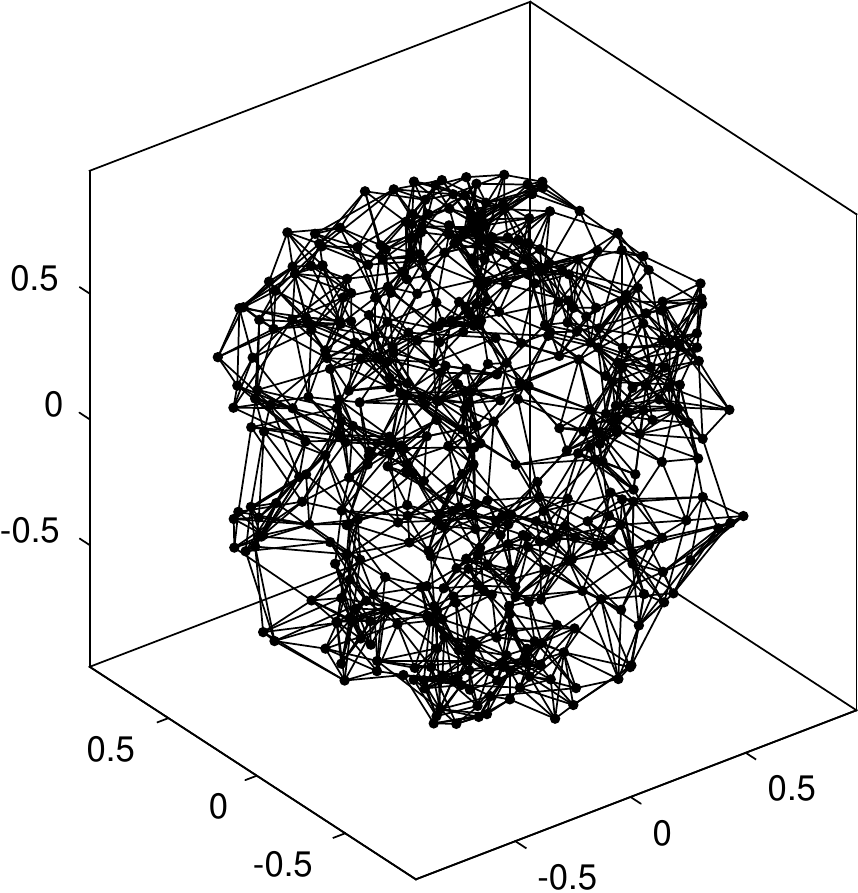} &
\includegraphics[width=.3\textwidth]{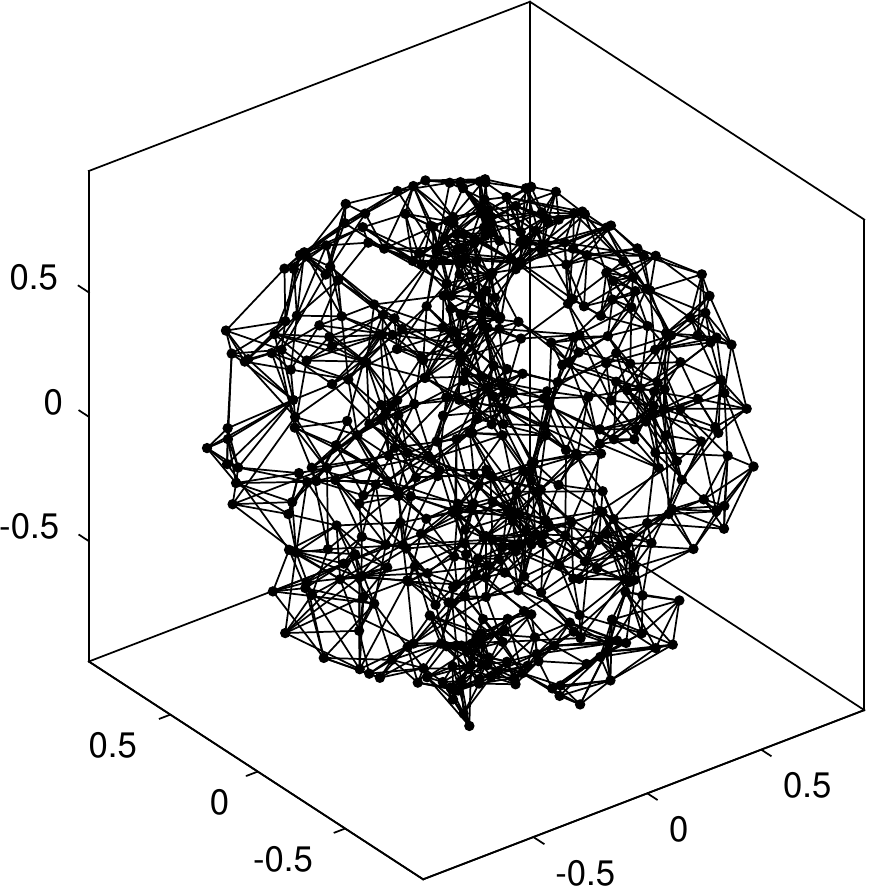} &
\includegraphics[width=.3\textwidth]{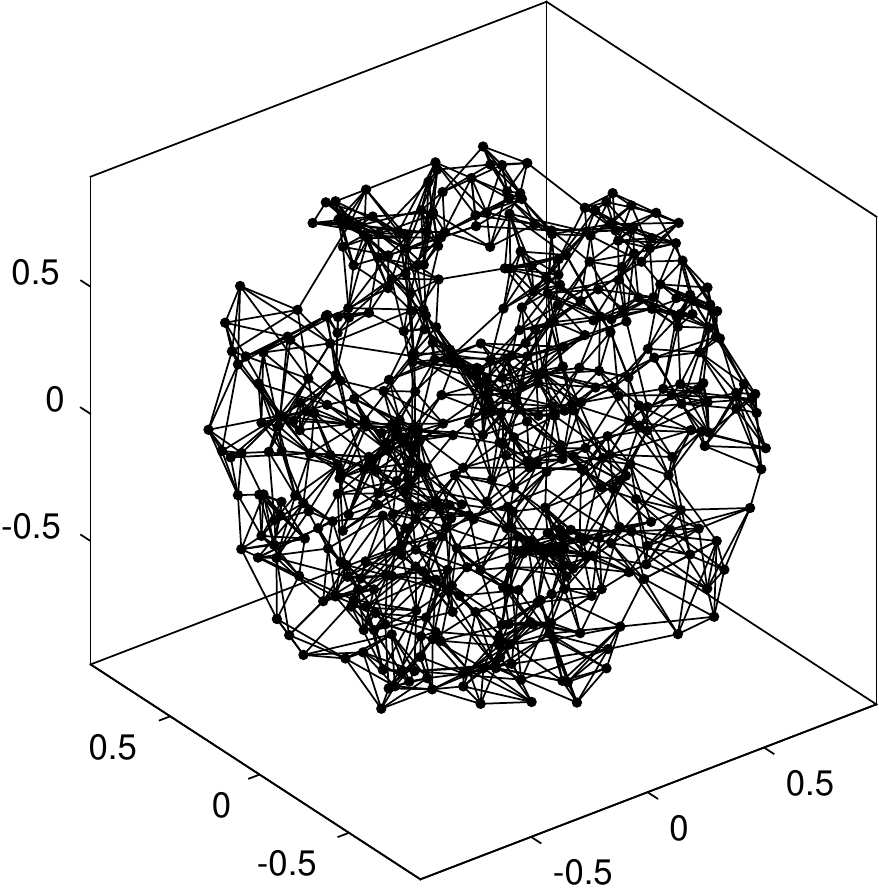} 
\end{tabular}
\caption{The graphs $G_1$ (left), $G_2$ (middle), and $G_3$ (right).}
\label{fig04}
\end{figure}

\noindent For each graph $G_1$, $G_2$ and $G_3$ we construct the corresponding bi-stochastic graph Laplacians 
$\mathbf{L}_1$, $\mathbf{L}_2$, and $\mathbf{L}_3$ and consider the optimization problem
$$
\mathbf{t}^* = \argmax_{\mathbf{t} \in T}
\lambda_1(\mathbf{L}_{\mathbf{t}}).
$$
By setting $t_3 = 1 - (t_1 + t_2)$ we can optimize $\lambda_1(\mathbf{L}_{\mathbf{t}})$
over $(t_1,t_2)$ such that $0 \le t_1,t_2$ and $t_1 + t_2 \le 1$, see Figure \ref{fig05}.
\begin{figure}[h!]
\centering
\includegraphics[width=.6\textwidth]{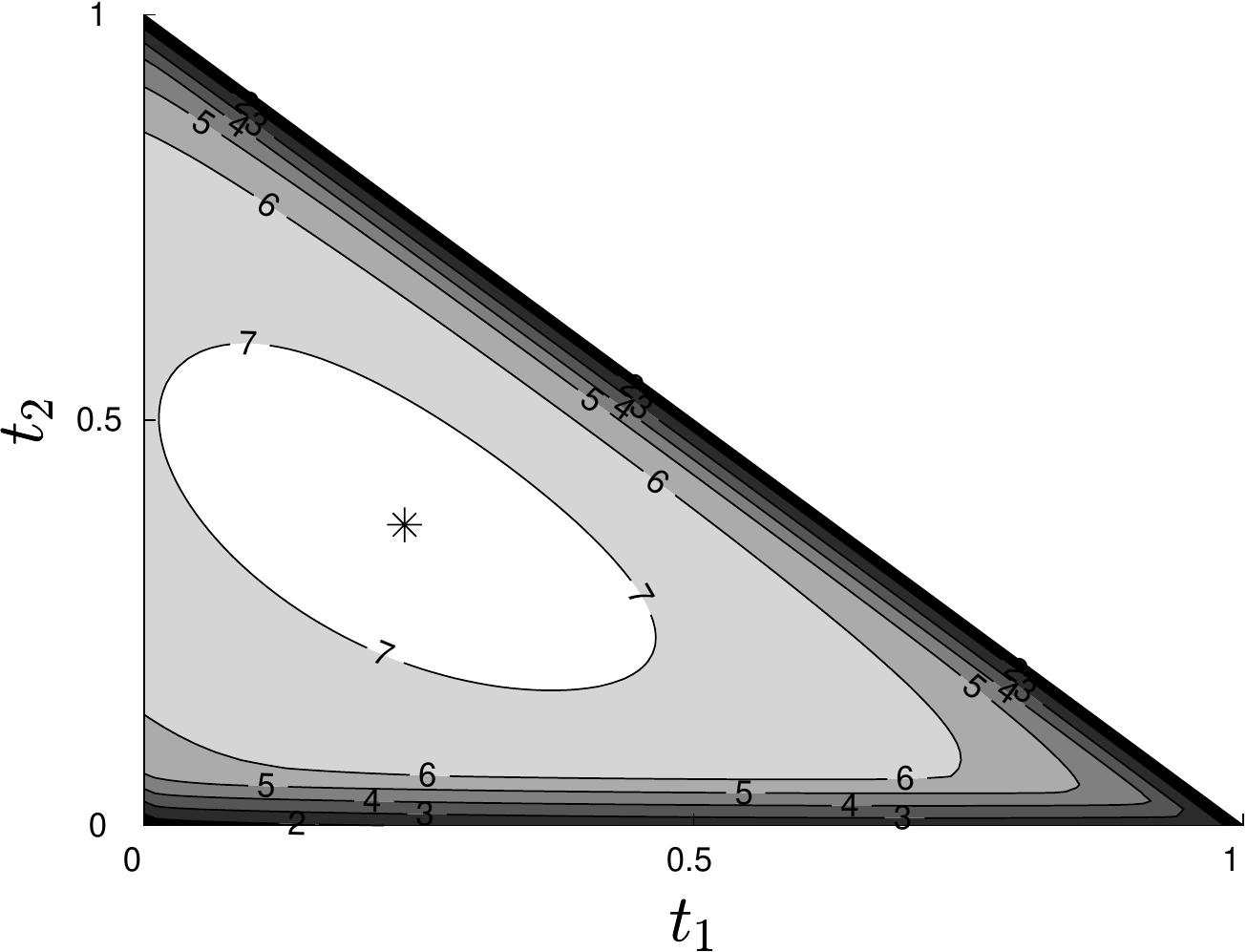} 
\caption{A contour plot of $\lambda_1(\mathbf{L}_{\mathbf{t}})$ for
 $0 \le t_1,t_2$ and $t_1 + t_2 \le 1$.}
\label{fig05}
\end{figure}
Using numerical optimization, we find that
$$
\mathbf{t}^* \approx (0.236853469652210,  0.371066650569015,  0.392079879778775).
$$
Validating the results of this optimization procedure using Theorem \ref{thm1} gives
$$
\left| \max_{k \in \{1,2,3\}} s_{\mathbf{L}_k}(\boldsymbol{\psi}_1(\mathbf{L}_{\mathbf{t}^*})) - \lambda_1(\mathbf{L}_{\mathbf{t}^*})
\right|\le   9.502285891471729 \times 10^{-8},
$$
so the numerical results are very close to optimal. Since we know how the 
graphs were constructed, we can further interpret the result. As in the previous example, 
the common variable is the distance of a point to the origin. To demonstrate that
 $\boldsymbol{\psi}_1(\mathbf{L}_{\mathbf{t}^*})$ is a re-scaling of the common variable,
 we plot $\boldsymbol{\psi}_1(\mathbf{L}_{\mathbf{t}^*})$ versus the distance to the 
 origin $r$; for comparison, we also plot  $\boldsymbol{\psi}_1(\mathbf{L}_{1})$, 
 $\boldsymbol{\psi}_1(\mathbf{L}_{2})$,
 and  $\boldsymbol{\psi}_1(\mathbf{L}_{3})$ against $r$, see Figure \ref{fig06}.

\begin{figure}[h!]
\begin{tabular}{cccc}
\includegraphics[width=.22\textwidth]{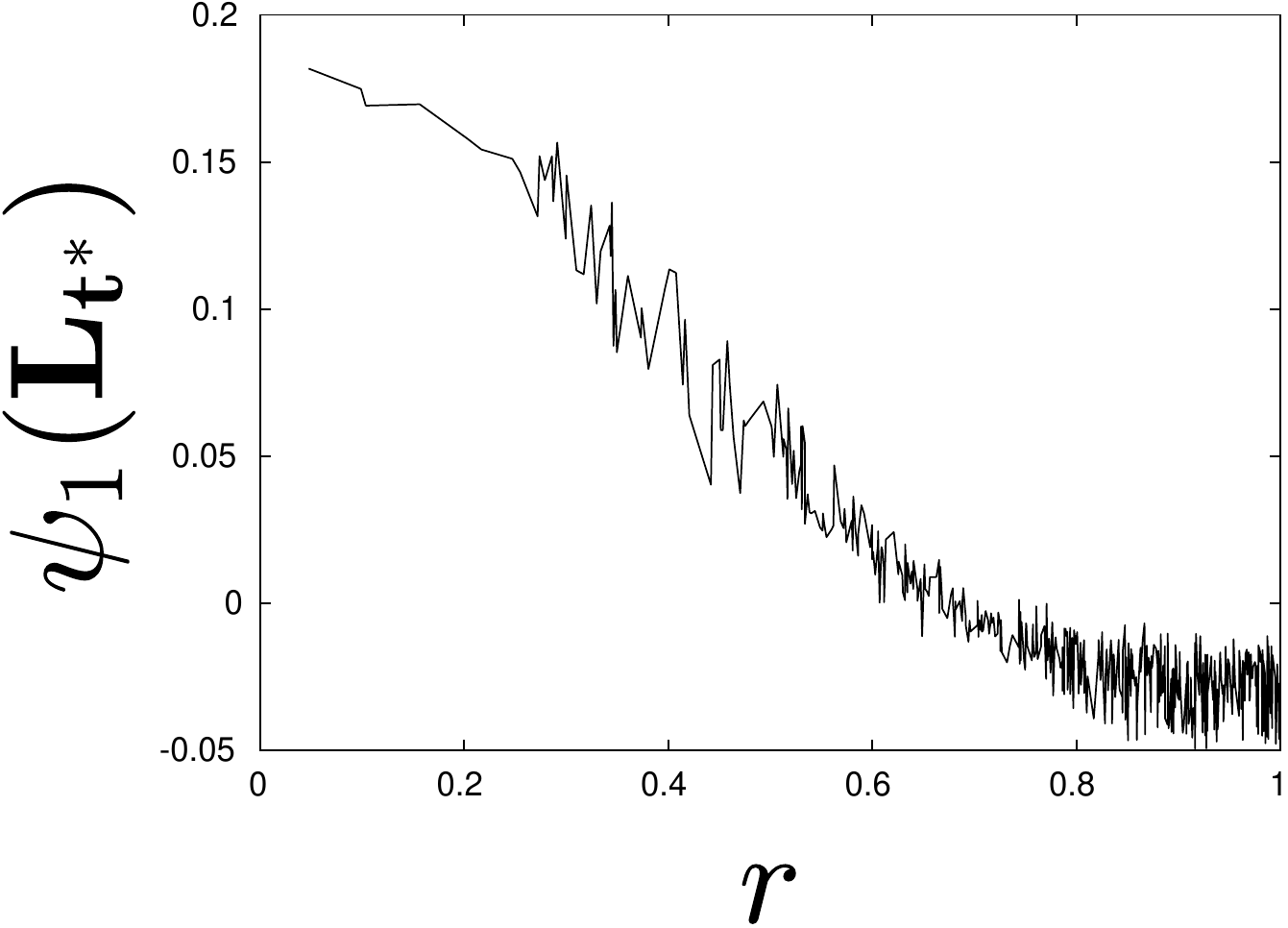} &
\includegraphics[width=.22\textwidth]{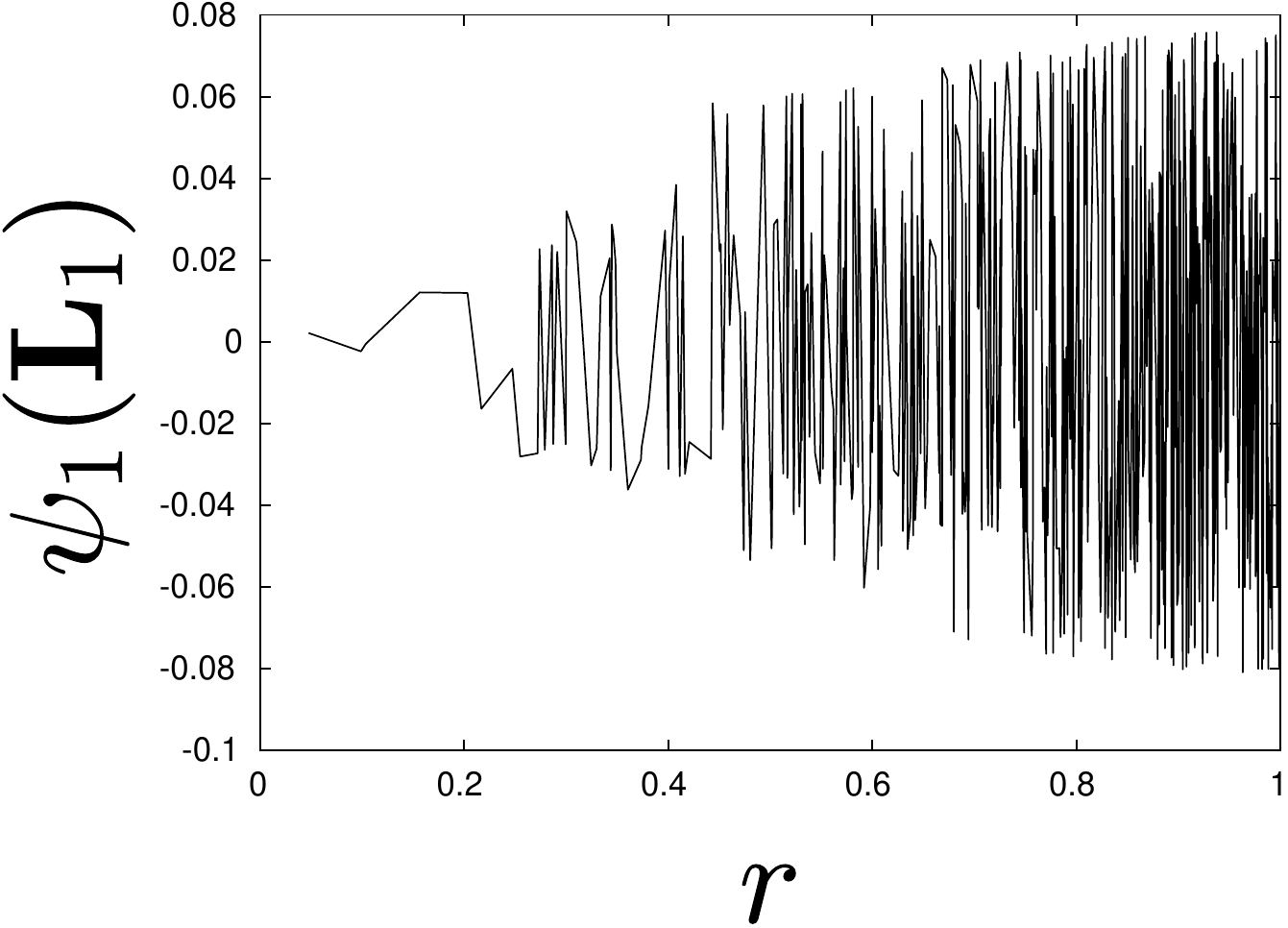} &
\includegraphics[width=.22\textwidth]{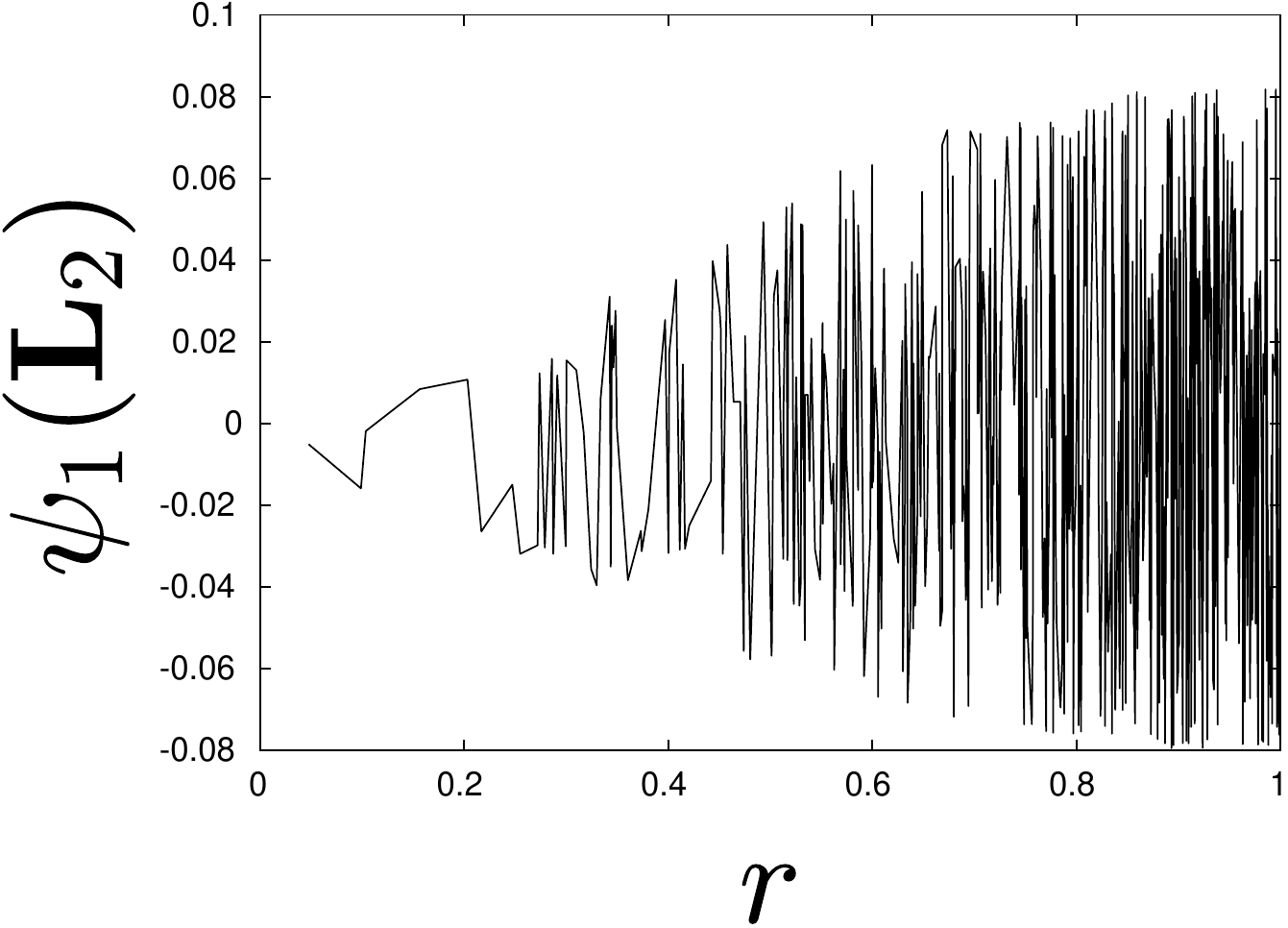}  &
\includegraphics[width=.22\textwidth]{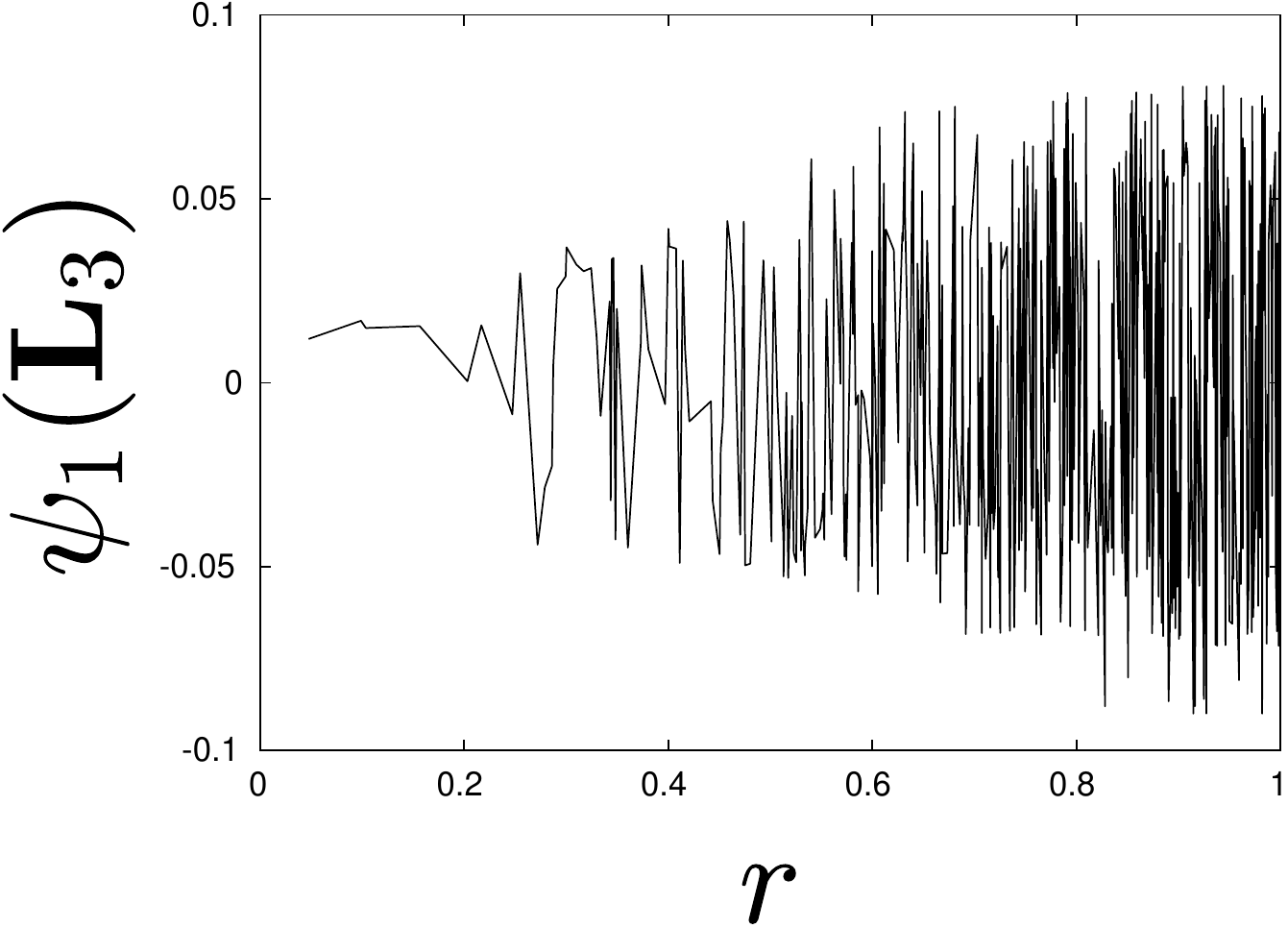} 
\end{tabular}
\caption{The first nontrivial eigenvectors of $\mathbf{L}_{\mathbf{t}^*}$,
$\mathbf{L}_1$, $\mathbf{L}_2$, and $\mathbf{L}_3$ versus 
the common variable $r$.}
\label{fig06}
\end{figure}

\noindent Finally, we note that this example has some interesting asymmetry. In Figure \ref{fig05} 
observe that the level line $\lambda_1(\mathbf{L}_\mathbf{t}) = 7$ (the closest 
level line to the maximum value) almost intersects the line $t_1 =0$.
  In contrast, the value of 
$\lambda_1(\mathbf{L}_{\mathbf{t}})$ on the lines $t_2 = 0$ and $t_3 = 1-(t_1+t_2)$ 
are close to $1$. This indicates that just using
the graphs $\{G_2,G_3\}$ could allow us to approximately determine the common variable, 
while using $\{G_1,G_2\}$ or $\{G_1,G_3\}$ would give bad results. Why is this the case?
By definition points in $X_1,X_2$ have the same $z$-coordinate, and points in
$X_1,X_3$ have the same $y$-coordinate, while the only common variable for points 
in $X_2,X_3$ is the distance of a point from the origin. For example, if
we just consider $X_1,X_2$, then the function $f(x,y,z) = z$ is smooth with
respect to $G_1$ and $G_2$, but not smooth with respect to $G_3$.

\subsection{Horizontal and vertical barbell example}
Next, we provide a degenerate example, where we are given three graphs $G_1$, $G_2$, 
$G_3$, and the optimal value of $\mathbf{t} = (t_1,t_2,t_3)$ occurs on the boundary of 
the region $\{ (t_1,t_2) : 0 \le t_1,t_2 \text{ and }t_1 + t_2 \le 1\}$.
Let $D = \{ x \in \mathbb{R}^2 : \|x\|_{\ell^2}\le 1\}$ be the unit disc, and
define the functions $f,g : \mathbb{R}^2 \rightarrow \mathbb{R}^2$ by 
$$
f(x,y) = \big(x,y \cdot(1-\cos \pi x)\big), \quad \text{and} \quad g(x,y) =
\big(x \cdot (1-\cos \pi y),y\big). 
$$
Informally speaking, the maps $f$ and $g$ squeeze the disc into a horizontal
barbell shape and a vertical barbell shape, respectively.
Let $X_1= \{x_1,\ldots,x_n\}$ be a set of $n=250$ independent uniformly random
points from the unit disc $D$. Set $X_2 = f(X_1)$, and $X_3 = g(X_1)$, and let
$G_1,G_2,G_3$ be  $6$-nearest neighbor graphs of $X_1,X_2,X_3$, respectively,
see Figure \ref{fig07}. 
\begin{figure}[h!]
\centering
\begin{tabular}{ccc}
\includegraphics[width=.3\textwidth]{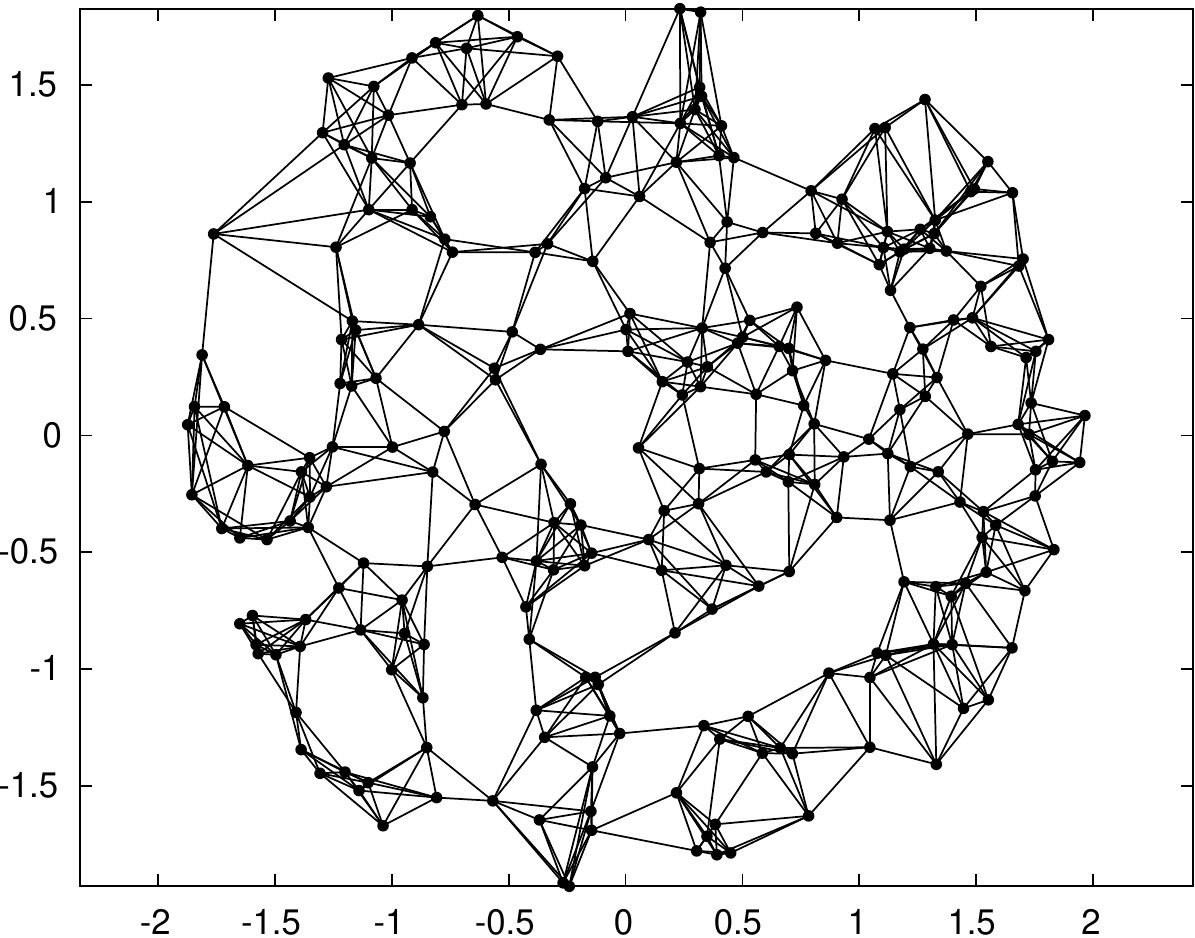} &
\includegraphics[width=.3\textwidth]{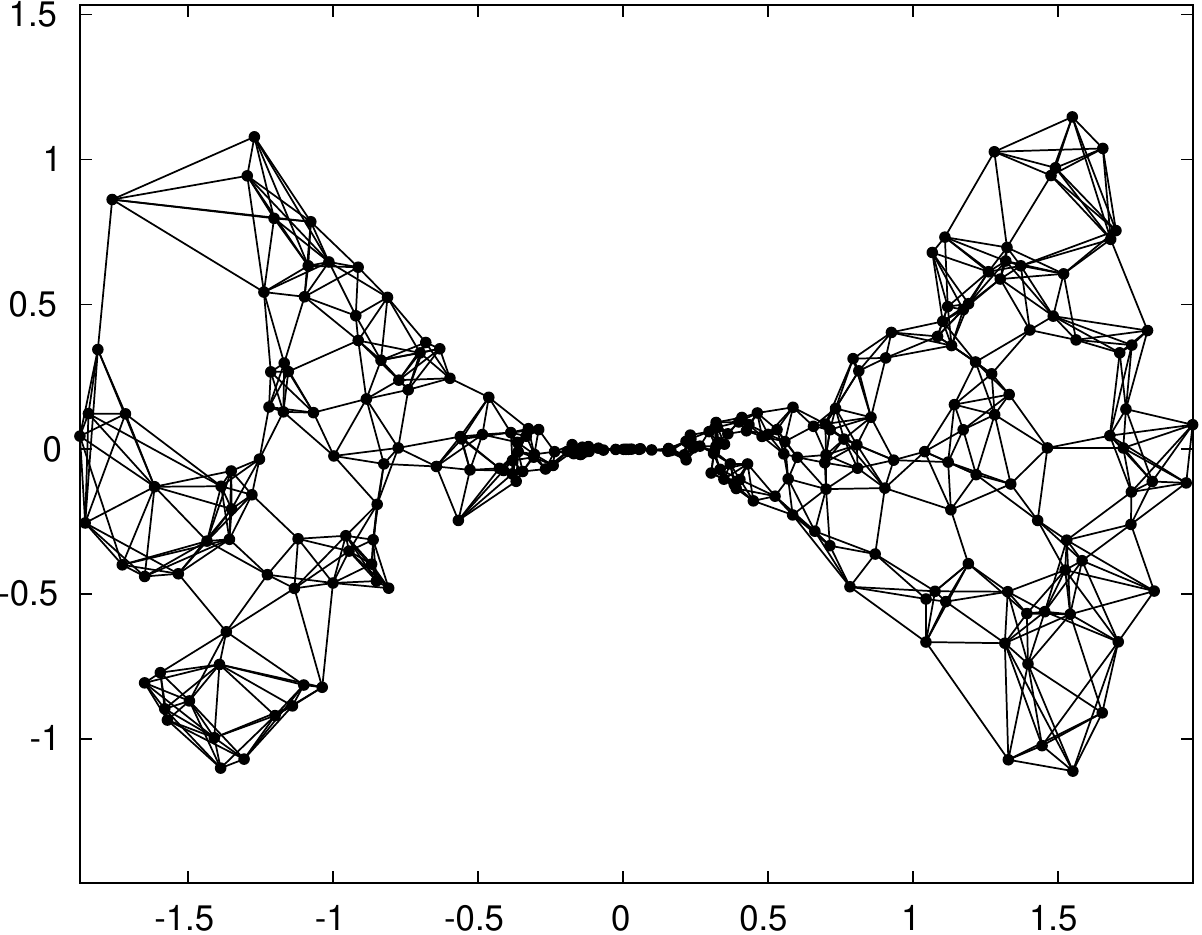} &
\includegraphics[width=.3\textwidth]{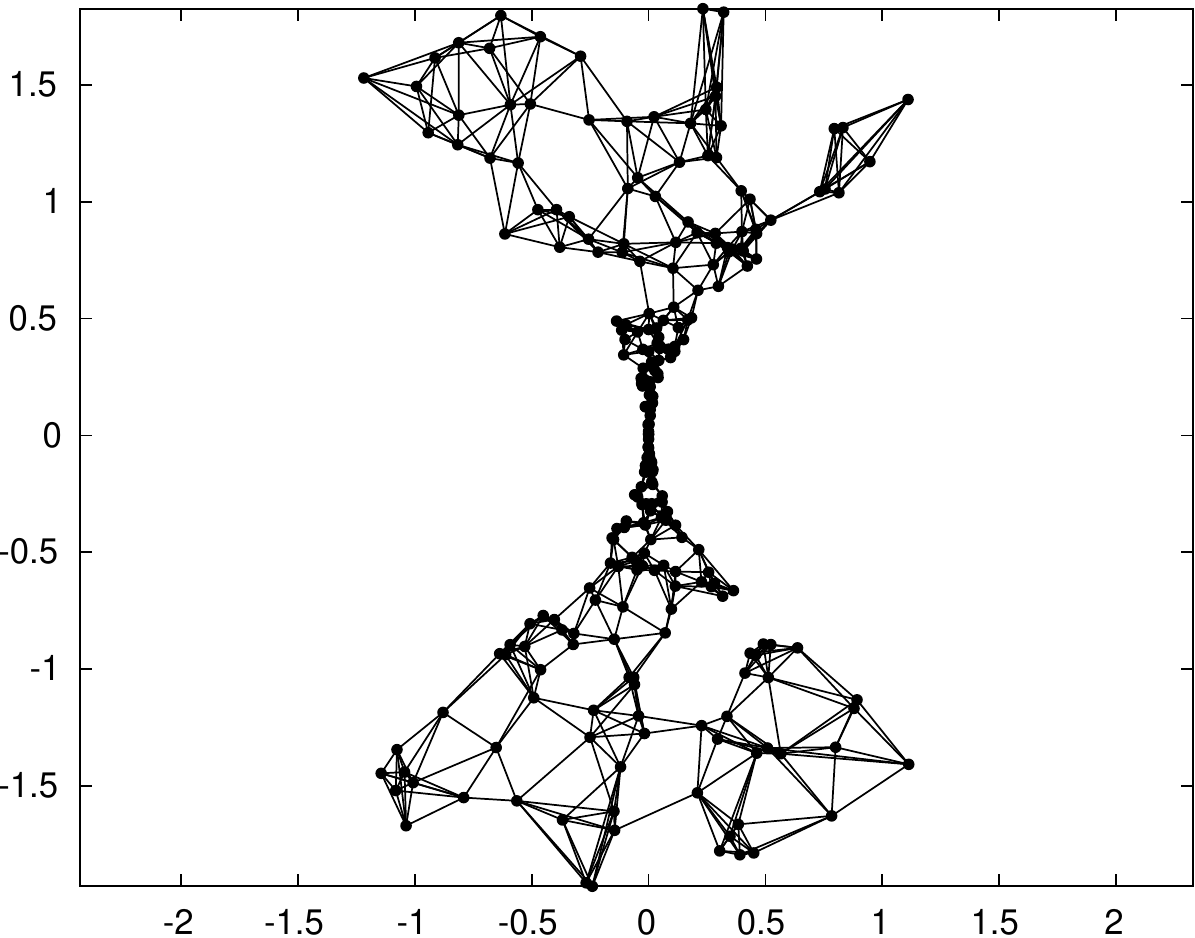} \\
\end{tabular}
\caption{The graphs $G_1$ (left), $G_2$ (middle), and $G_3$
(right).}
\label{fig07}
\end{figure}

Let $\mathbf{L}_1$, $\mathbf{L}_2$, and $\mathbf{L}_3$ be the bi-stochastic graph Laplacians of 
$G_1$, $G_2$, and $G_3$, respectively. 
We plot $\lambda_1(\mathbf{L}_\mathbf{t})$ versus $(t_1,t_2)$ in Figure \ref{fig08}.
\begin{figure}[h!]
\centering
\includegraphics[width=.6\textwidth]{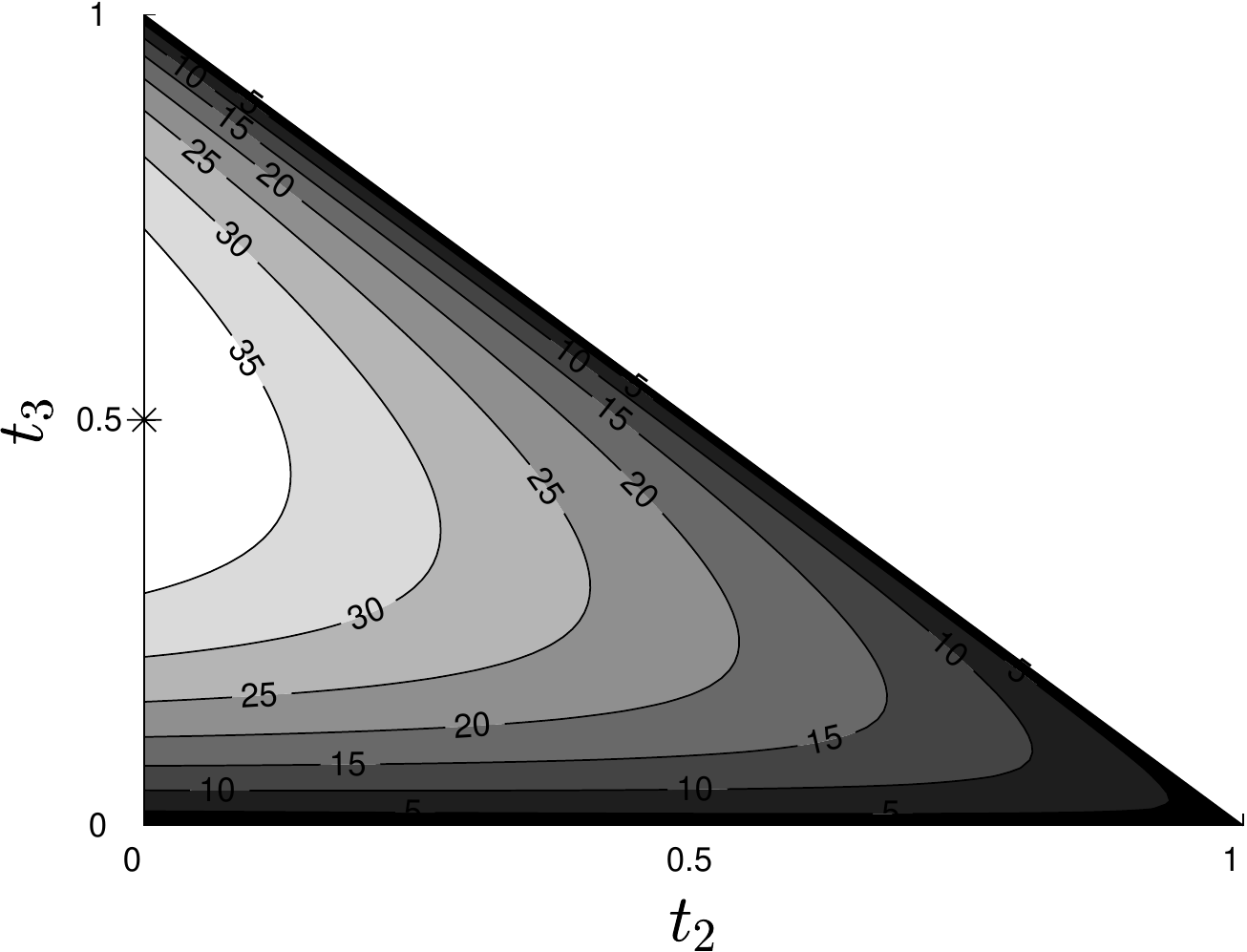} 
\caption{Contour plot of $\lambda_1(\mathbf{L}_\mathbf{t})$ for $(t_1,t_2)$ such that 
$0 \le t_1,t_2$ 
and $t_1 + t_2 \le 1$. The maximum is attained at the star.} 
\label{fig08}
\end{figure}

Using numerical optimization we find that
$$
\mathbf{t}^* \approx (2.871019259460022 \times 10^{-15}, .5005153871788890,    .4994846128211081);
$$
and using Theorem \ref{thm1} to compute an error estimate gives
$$
\left|\max_{k \in \{1,2,3\}} s_{\mathbf{L}_k}(\boldsymbol{\psi}_1(\mathbf{L}_{\mathbf{t}^*})) - \lambda_1(\mathbf{L}_{\mathbf{t}^*})
\right| \le  2.807697995876879 \times 10^{-6},
$$
which verifies that we have solved the optimization problem correctly. 
Interestingly, the optimal value occurs on the boundary on of $\{(t_1,t_2) : 
0 \le t_1,t_2 \text{ and } t_1 + t_2 \le 1 \}$. Since we know how the graphs were created, 
this behavior makes sense: the sets $X_2$ and $X_3$ are modifications of $X_1$ where 
points have been squeezed together, which makes the corresponding vertices highly 
connected in the graphs $G_2$ and $G_3$. This in turn imposes extra conditions for a 
function to be smooth with respect to $G_2$ or $G_3$. Furthermore, most of the edges 
appearing in $G_1$ appear either in $G_2$ or $G_3$.

For this example, the common variable is less straightforward to define. However, one
property that is maintained under the deformation is as follows: points in the same 
quadrant of the plane should remain connected across all graphs. In particular, we can 
partition the points in $X_1$ into four groups 
$$
\begin{array}{ccc}
NE &=& \{(x,y) \in X_1 : x \ge 0, y \ge 0\}, \\
NW &=& \{(x,y) \in X_1 : x < 0, y > 0\}, \\
SW &=& \{(x,y) \in X_1 : x < 0, y < 0\}, \\
SE &=& \{(x,y) \in X_1 : x \ge 0, y < 0\}.
\end{array}
$$
To understand what the optimal Laplacian $\mathbf{L}_{\mathbf{t}^*}$ is encoding, we 
plot the first three (nontrivial eigenvectors) of this operator, see Figure \ref{fig09}.

\begin{figure}[h!]
\centering
\includegraphics[width=.6\textwidth]{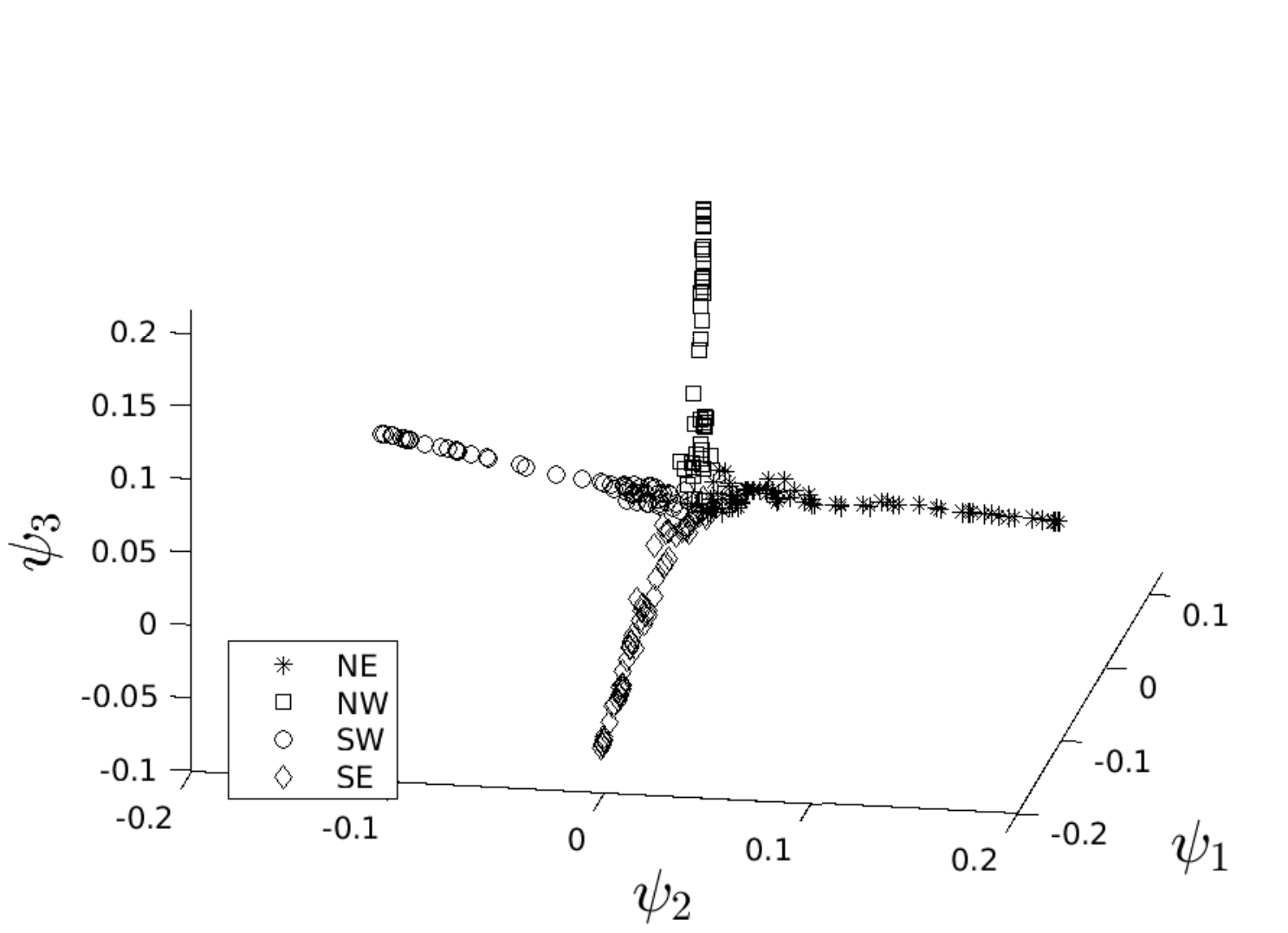} 
\caption{The first three nontrivial eigenvectors of $\mathbf{L}_{\mathbf{t}^*}$ with markers
indicating the quadrant of the points in $X_1$}
\label{fig09}
\end{figure}

\noindent Running $k$-means clustering on the
embedding in Figure \ref{fig09} would approximately recover the different
groups of points NE, NW, SW, and SE.

\begin{remark}[Common information spectral clustering]
Spectral clustering is a clustering method whose first step is to embed the given data points
using the eigenvectors of an operator followed by running the $k$-means clustering
algorithm. The method in this paper
can be used to perform a common information spectral clustering algorithm by
using the eigenvectors $\mathbf{L}_{\mathbf{t}^*}$ to embed the points, 
and then running $k$-means clustering. Running $k$-means on Figure \ref{fig09} is an example of this common information spectral clustering.
\end{remark}


\subsection{Spiral and Torus} \label{spiraltorus} We conclude with an example illustrating Remark 
\ref{othernorm}: it can be advantageous to change the notion of smoothness. The two graphs in this example are a spiral in the plane and a two-dimensional torus embedded in $\mathbb{R}^3$ (see Figure \ref{fig10}). Formally, let $\{\theta_1,\ldots,\theta_n\}$ and $\{\phi_1,\ldots,\phi_n\}$ be $n=500$ independent
uniformly random angles from $[0,2\pi)$, and let $\{t_1,\ldots,t_n\}$ be independent
uniformly random points from the interval $[.25,1.5)$. We define the Spiral set
$X_1 = \{x_{1,1},\ldots,x_{n,1}\} \subset \mathbb{R}^2$ by
$$
x_{1,j} = (t_j + 0.45 \phi_j/(2 \pi)) \big( \cos(4 \pi (t_j-.25)/1.5), 
 \sin(4 \pi (t_j-.25)/1.5) \big).
$$
and define the Torus set $X_2 = \{x_{2,1},\ldots,x_{2,n}\} \subset \mathbb{R}^3$
by
$$
x_{2,j} = \big( (.75+.25\cos(\phi_j)) \cos(\theta_j),
(.75+.25 \cos(\phi_j))\sin(\theta_j), .25 \sin(\phi_j) \big),
$$
For each set $X_1$ and $X_2$ we construct $6$-nearest neighbor graphs $G_1$ and $G_2$,
see Figure \ref{fig10}.
\begin{figure}[h!]
\begin{tabular}{cc}
\includegraphics[width=.4\textwidth]{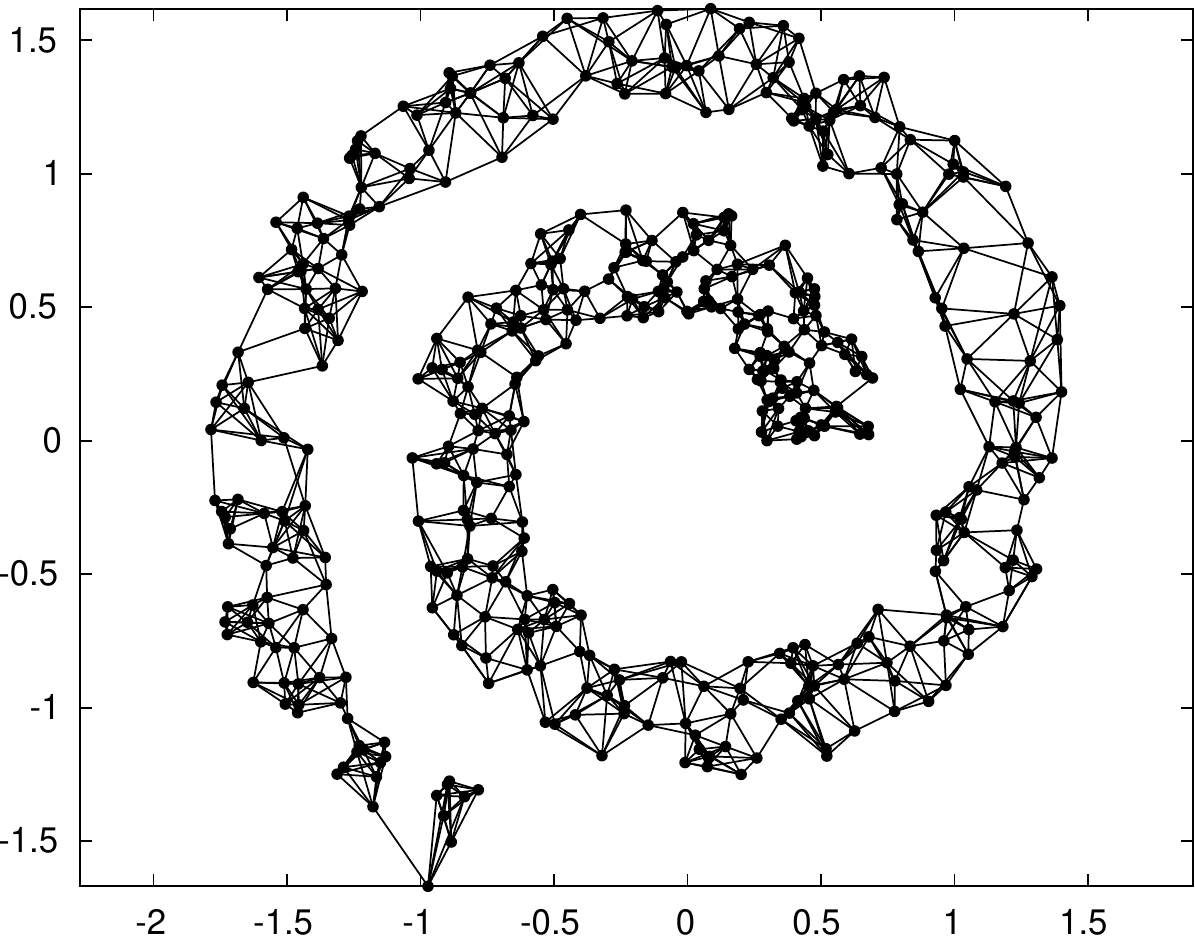} &
\includegraphics[width=.5\textwidth]{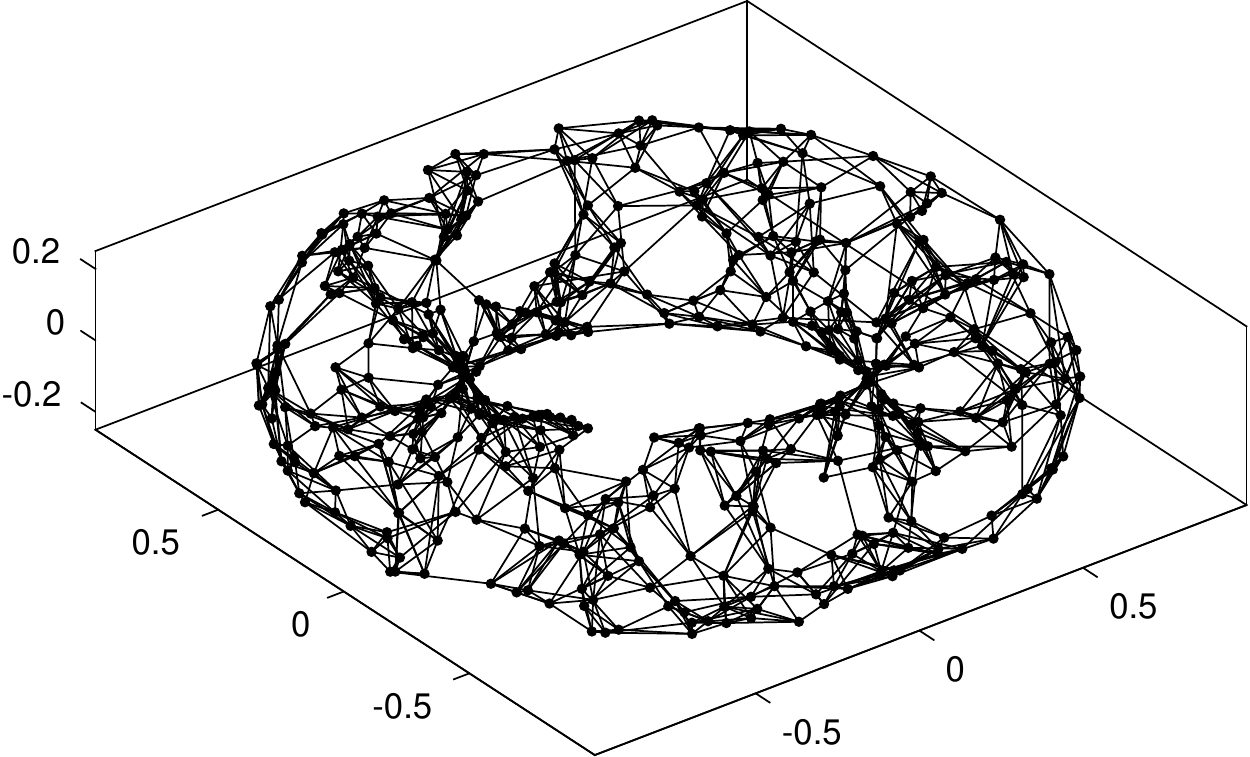} 
\end{tabular}
\caption{The graph $G_1$ (left), and graph $G_2$ (right).} \label{fig10}
\end{figure}

\noindent The common variable used to define both graphs is the parameter
$\phi_j$. The parameter $\phi_j$ controls the location in the 
width of the spiral (the width is very small compared to the height), and similarly, 
controls the location of a point along the smaller circle used to form the torus. The definition
\begin{equation} \label{smoothnessex}
s_{\mathbf{L}}(\mathbf{x}) = \frac{1}{\lambda_1(\mathbf{L})}
\mathbf{x}^\top \mathbf{L} \mathbf{x}
\end{equation}
has, in this example, a significant downside: the spiral is only weakly connected and has a very small first Laplacian eigenvalue. The renormalization ensures that $s_{\mathbf{L}}(\mathbf{x})$ is 1, when $\mathbf{x}$ is the first Laplacian eigenvector, however, it will be exceedingly large for all vectors in the orthogonal complement. The degeneracy in the spectrum implies that  $s_{\mathbf{L}}(\mathbf{x})$ simply does not accurately capture the spectral geometry of the spiral. The normalization 
from Remark \ref{othernorm} 
\begin{equation} \label{smoothnessav}
a_{\mathbf{L}}(\mathbf{x}) = 
\left( \frac{1}{n-1} \sum_{j=1}^{n-1} \lambda_j(\mathbf{L}) \right)^{-1}
\mathbf{x}^\top \mathbf{L} \mathbf{x}
\end{equation}
provides a reasonable alternative: we keep the quadratic form 
$\mathbf{x}^\top \mathbf{L} \mathbf{x}$ but use a normalization which maintains the global structure of the spectrum better. This is also illustrated in
Figure \ref{fig11}.

\begin{figure}[h!]
\centering
\begin{tabular}{cc}
\includegraphics[width=.4\textwidth]{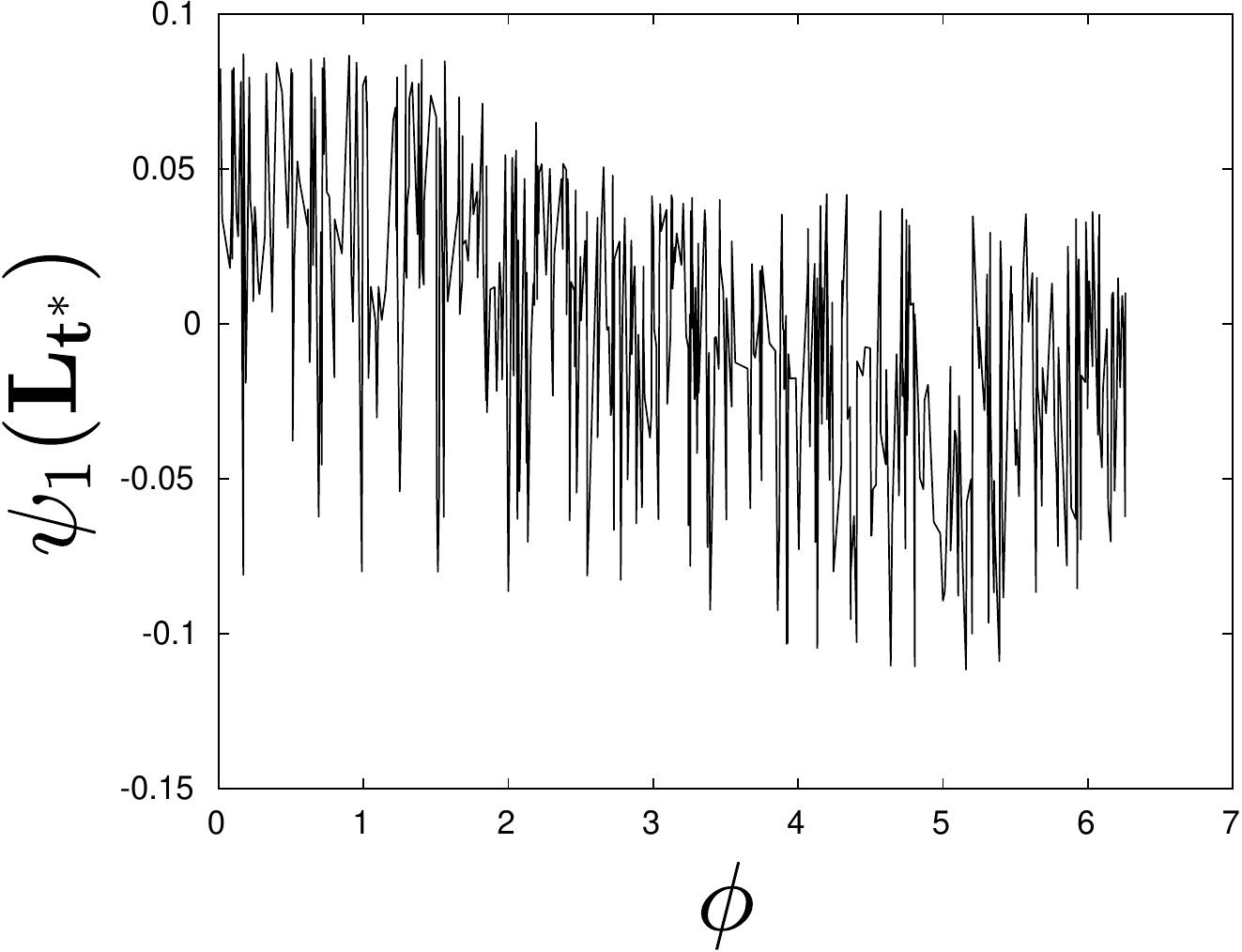} &
\includegraphics[width=.4\textwidth]{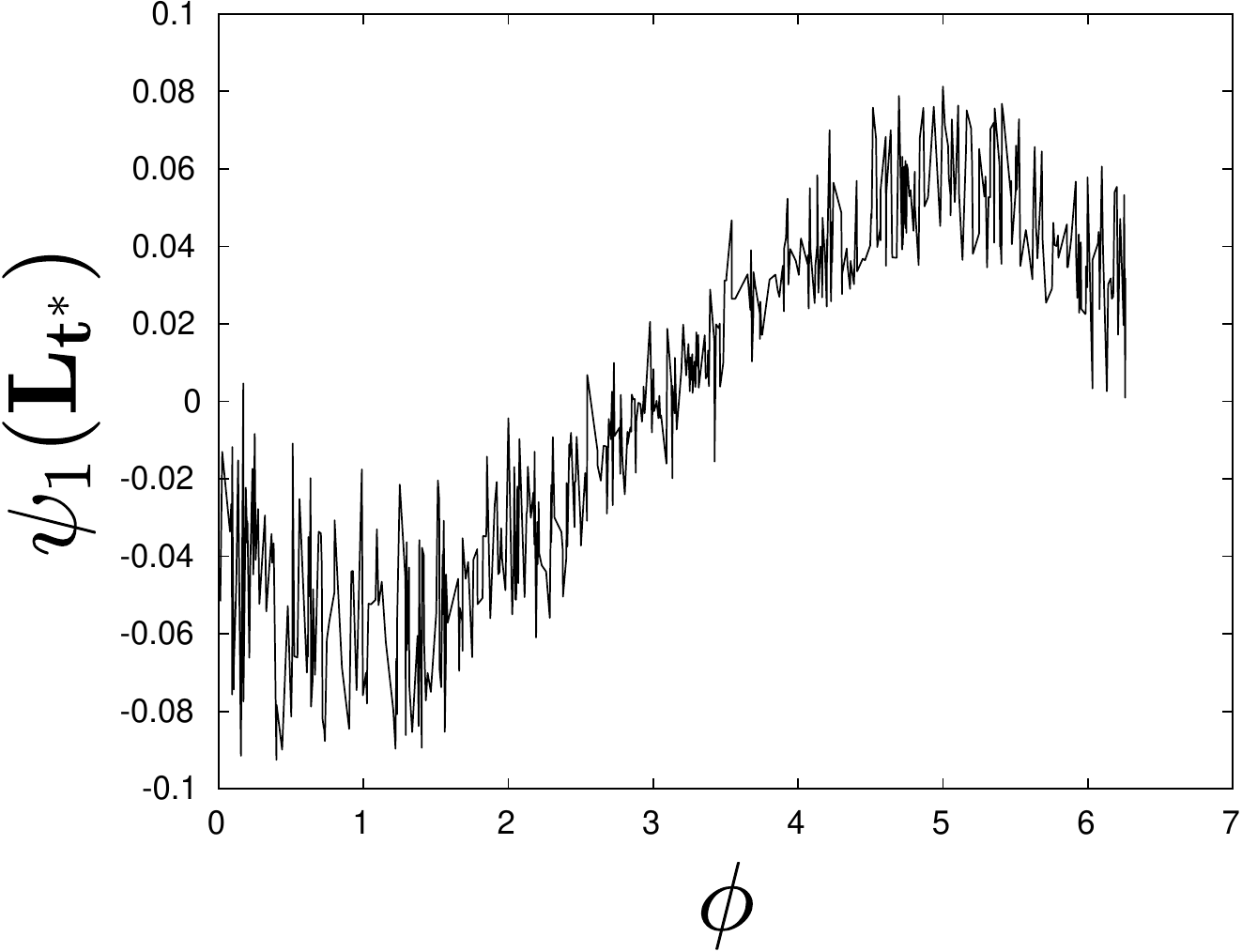} 
\end{tabular}
\caption{Using $s_\mathbf{L}$ (left) and $a_\mathbf{L}$ (right).} 
\label{fig11}
\end{figure}

\begin{remark}[More general definitions of smoothness] \label{gendef}
In this paper, we considered two notions of smoothness based on 
 the quadratic form $\mathbf{x}^\top \mathbf{L} \mathbf{x}$: 
the `smoothest function' normalization, and the 
`average smoothness' normalization, see Remark 
\ref{othernorm}. There are several ways to define intermediate notions of 
smoothness. For example, given weights $w_1,\ldots,w_{n-1}$ one could define 
a notion of smoothness by normalizing by a weighted sum of the eigenvalues:
$$
w_{\mathbf{L}}(\mathbf{x}) = 
 \left( \sum_{j=1}^{n-1} w_j \lambda_j(\mathbf{L}) \right)^{-1}
\mathbf{x}^\top \mathbf{L} \mathbf{x}.
$$
It is also possible to modify the Laplacians used in the definition; since Theorem \ref{thm1} and
Theorem \ref{thm2} only require that
$\mathbf{L}$ is symmetric positive semi-definite, and has eigenvalue $0$ of 
multiplicity $1$ corresponding to constant functions, then it is also possible
to define a notion of smoothness by taking a matrix function of the
graph Laplacian $\mathbf{L}$. For example, for $\alpha > 0$ we could define
$$
s_{\mathbf{L}}^\alpha(\mathbf{x}) = 
\frac{1}{\lambda_1(\mathbf{L})^\alpha} \mathbf{x}^\top 
\mathbf{L}^\alpha \mathbf{x}.
$$
Such a normalization could be used to adjust for different growth rates of
eigenvalues between different graphs. For example, if the given graphs are
approximating manifolds of different dimensions, as in Figure \ref{fig10},
then by Weyl's Law the eigenvalues of the Laplace-Beltrami operator on the 
underlying manifolds will grow at different rates.

\end{remark}

\section{Technical lemma}

\subsection{Bi-stochastic normalization} \label{bistochastic}
We say
that an $n \times n$ nonnegative matrix $\mathbf{B}$ is bi-stochastic if $\mathbf{B} \mathbf{1} = \mathbf{B}^\top \mathbf{1}
= \mathbf{1}$, where $\mathbf{1}$ denotes the $n$-dimensional vector whose entries are all $1$.

\begin{lemma}[Sinkhorn and Kopp \cite{MR210731}] \label{lemsink}
Let $\mathbf{A}$ be an $n \times n$ nonnegative symmetric matrix with a positive main
diagonal. Then, there exists a unique positive definite diagonal matrix $\mathbf{D}$ such
that 
$$
\mathbf{B} = \mathbf{D}^{-1/2} \mathbf{A} \mathbf{D}^{-1/2} 
$$
is a bi-stochastic matrix. Moreover, the matrix $\mathbf{B}$ can be determined by
alternating between normalizing the rows and columns (as detailed below).
\end{lemma}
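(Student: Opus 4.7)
The plan is to reduce the existence and uniqueness claims to a strictly convex optimization problem, and then to treat the iterative scheme separately. Setting $d_i = e^{2 z_i}$, the bi-stochasticity condition $\mathbf{D}^{-1/2} \mathbf{A} \mathbf{D}^{-1/2} \mathbf{1} = \mathbf{1}$ rewrites as $\sum_j a_{ij} e^{-z_i - z_j} = 1$ for every $i$, which is precisely the first-order condition for the smooth functional
$$
F(z) \;=\; \sum_{i,j=1}^n a_{ij} e^{-z_i - z_j} \;+\; 2 \sum_{i=1}^n z_i, \qquad z \in \mathbb{R}^n.
$$
Thus it suffices to show that $F$ has a unique critical point, and to identify this critical point with the limit of the alternating procedure.

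First I would establish strict convexity. A direct Hessian computation yields $\partial^2 F/\partial z_i^2 = 2 \sum_j a_{ij} e^{-z_i - z_j}$ and $\partial^2 F/\partial z_i \partial z_k = 2 a_{ik} e^{-z_i - z_k}$ for $i \neq k$, and rearranging using the symmetry of $\mathbf{A}$ gives
$$
\xi^\top \bigl(\nabla^2 F(z)\bigr) \xi \;=\; 2 \sum_i a_{ii} e^{-2 z_i} \xi_i^2 \;+\; \sum_{i \neq j} a_{ij} e^{-z_i - z_j} (\xi_i + \xi_j)^2.
$$
The diagonal sum is strictly positive for $\xi \neq 0$, which is exactly where the hypothesis $a_{ii} > 0$ enters. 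Both sums are manifestly nonnegative, so $\nabla^2 F(z) \succ 0$ everywhere and $F$ is strictly convex.

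The main obstacle, and the reason this is not a one-line argument, is coercivity. I would proceed ray by ray: along $z = t v$ with $\|v\| = 1$, if some $v_i < 0$ then $a_{ii} e^{-2 t v_i} \to \infty$ exponentially as $t \to +\infty$ and dominates the linear contribution $2 t \sum_i v_i$; otherwise $v_i \geq 0$ for all $i$ with $\sum v_i > 0$, so the exponentials are uniformly bounded while $2 t \sum v_i \to +\infty$. A standard argument (invoking continuity and the one-sided convexity inequality $F(t v_n) \leq (t/\|x_n\|) F(x_n) + (1 - t/\|x_n\|) F(0)$ applied to any putative sequence $x_n$ with $\|x_n\| \to \infty$ and $v_n := x_n/\|x_n\| \to v$) upgrades this ray-wise blow-up to full coercivity. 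Therefore $F$ attains its minimum, and strict convexity forces the minimizer $z^*$ to be unique; then $\mathbf{D} = \mathrm{diag}(e^{2 z_i^*})$ is the unique positive diagonal matrix sought.

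For the iterative scheme, alternating row and column normalizations can be interpreted as block coordinate updates on an asymmetric lift of $F$ (with $z_i + z_j$ replaced by $u_i + v_j$ and independent row/column variables $u, v$); the lifted functional is still strictly convex and coercive by the same argument, so the iterates converge to its unique minimizer, and the uniqueness result above forces the two limiting scalings to coincide. This yields the symmetric factorization $\mathbf{D}^{-1/2} \mathbf{A} \mathbf{D}^{-1/2}$ as the limit of the alternating procedure; the classical Hilbert-metric contraction argument of Sinkhorn and Kopp \cite{MR210731} provides an alternative route with explicit geometric convergence rates.
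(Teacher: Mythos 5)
The paper offers no proof of this lemma at all: it is quoted verbatim from Sinkhorn and Knopp \cite{MR210731}, so your variational argument is necessarily a different route. For the existence and uniqueness of $\mathbf{D}$ your argument is sound and essentially complete. The reduction to the unique critical point of $F$, the use of the positive diagonal both to make the Hessian strictly positive definite and to force exponential blow-up along rays with a negative coordinate, and the convexity argument upgrading ray-wise blow-up to coercivity are all correct; this is a clean, self-contained alternative to the original combinatorial/contraction proof. (One harmless slip: the diagonal Hessian entry is $\partial^2 F/\partial z_i^2 = 2\sum_j a_{ij}e^{-z_i-z_j} + 2a_{ii}e^{-2z_i}$, so the coefficient of $a_{ii}e^{-2z_i}\xi_i^2$ in the quadratic form is $4$, not $2$; the conclusion is unaffected.)

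The last paragraph, however, contains a genuine gap. The lifted functional $G(u,v)=\sum_{i,j}a_{ij}e^{-u_i-v_j}+\sum_i u_i+\sum_j v_j$ is \emph{not} strictly convex and \emph{not} coercive: it is invariant under the gauge transformation $(u,v)\mapsto(u+c\mathbf{1},\,v-c\mathbf{1})$, since the linear terms cancel and $e^{-(u_i+c)-(v_j-c)}=e^{-u_i-v_j}$. Correspondingly the Hessian of each summand $e^{-u_i-v_j}$ annihilates the direction $e_{u_i}-e_{v_j}$, and minimizing sequences can escape to infinity along the gauge orbit. So the claim that ``the same argument'' applies to $G$ is false as stated. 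The block-coordinate-descent interpretation of the row/column normalizations is the right idea, but to close the argument you must quotient out (or normalize away, e.g.\ by fixing $\sum_i u_i = \sum_j v_j$) this one-parameter invariance, show convergence of the iterates to a minimizer of the reduced problem, and then argue separately that the resulting two-sided scaling $\mathrm{diag}(e^{-u_i})\,\mathbf{A}\,\mathrm{diag}(e^{-v_j})$ can be symmetrized --- e.g.\ by transposing to see that $(v,u)$ is also optimal, invoking uniqueness modulo the gauge to get $v=u+2c\mathbf{1}$, and taking $\mathbf{D}^{-1/2}=\mathrm{diag}(e^{-u_i-c})$. With those repairs the convergence claim goes through; as written, it does not.
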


The iterative procedure of alternating between normalizing the rows and columns
of a symmetric matrix can be expressed as follows. We initialize $\mathbf{Q}_0 = \mathbf{I}$,
 and define
 \begin{equation} \label{eqq}
\mathbf{Q}_{j+1} = \text{diag}\left(\mathbf{A} \mathbf{Q}^{-1}_j \vec{1} \right),
\end{equation}
and set $\mathbf{D} = \lim_{k \rightarrow \infty} \mathbf{Q}_{2k+1} \mathbf{Q}_{2k}$, then $\mathbf{D}^{-1/2} \mathbf{A}
\mathbf{D}^{-1/2}$ will be bi-stochastic. 
Given an adjacency matrix $\mathbf{A}$ satisfying the conditions of Lemma \ref{lemsink}, and unique positive definite matrix $\mathbf{D}$ from Lemma \ref{lemsink}, we define the bi-stochastic graph Laplacian
$\mathbf{L}$ by
$$
\mathbf{L} = \mathbf{I} - \mathbf{D}^{-1/2} \mathbf{A} \mathbf{D}^{-1/2}.
$$

\begin{remark}
The bi-stochastic Laplacian is closely related to other operators such as the graph Laplacian $\mathbf{L} =
\mathbf{Q} - \mathbf{A}$ (where $\mathbf{Q} = \mathbf{A} \mathbf{1}$), the normalized graph Laplacian 
$\mathbf{L} = \mathbf{I} - \mathbf{Q}^{-1/2} \mathbf{A} \mathbf{Q}^{-1/2}$
, and the random walk graph Laplacian $\mathbf{L} = \mathbf{I} - \mathbf{Q}^{-1} \mathbf{A}$. 
The bi-stochastic Laplacian $\mathbf{L}$ is symmetric positive semi-definite, has eigenvector $\mathbf{1}$ of
eigenvalue $0$, and forms a Markov transition matrix when subtracted from the identity matrix.
\end{remark}

\begin{remark}
Numerically, the bi-stochastic graph Laplacian is similar to other graph Laplacians. For
example, if $\mathbf{D} = \mathbf{Q}_0 \mathbf{Q}_1$, where $\mathbf{Q}_j$ is defined above in 
\eqref{eqq}, then $\mathbf{I} - \mathbf{D}^{-1/2} \mathbf{A}
\mathbf{D}^{-1/2}$ is the normalized graph Laplacian. In practice, the
normalized graph Laplacian and random walk graph Laplacian are often used
instead of the standard graph Laplacian. The reason
we use the bi-stochastic Laplacian for our numerical results is that it is closely related 
to the normalized graph Laplacian and random walk graph Laplacian, and has all the properties we require: symmetric positive definite with eigenvalue $0$ of multiplicity $1$ corresponding to constant functions.
\end{remark}

\section{Comments and Remarks}
We conclude with a couple of general comments.

\subsection{Extension to multiple functions}
Recall we defined the smoothest function or common variable $\boldsymbol{\psi}_1$ for a collection of
graph $\mathcal{G}$ by
$$
\boldsymbol{\psi}_1 = 
\argmin_{\mathbf{x} \in X} \max_{k \in \{1,\ldots,m\}} s_{\mathbf{L}_k}(\mathbf{x})  .
$$
By induction we can define
$$
X_k = \{ \mathbf{x} \in X : \boldsymbol{\psi}_1 ^\top \mathbf{x} = \cdots = \boldsymbol{\psi}_k^\top \mathbf{x} = 0 \},
$$
and 
$$
\boldsymbol{\psi}_{k+1} = \argmin_{\mathbf{x} \in X_k} \max_{k \in \{1,\ldots,m\}} s_{\mathbf{L}_k}(\mathbf{x})  ,
$$
for $k = 1,2,3,\ldots$. Informally speaking, $\boldsymbol{\psi}_{k+1}$ is the
smoothest function on $\mathcal{G}$ that is orthogonal to
$\boldsymbol{\psi}_1,\ldots,\boldsymbol{\psi}_k$. Moreover, by restricting the operators $\mathbf{L}_1,\ldots,\mathbf{L}_m$ to $X_k$
the minimax principle of Theorem \ref{thm2} can be applied to solve this optimization problem.
For applications, 
one might suspect that only the first, or possibly two or three, of these functions
would be useful; however, from a theoretical perspective considering
the orthogonal basis $\boldsymbol{\psi}_1,\ldots,\boldsymbol{\psi}_{n-1}$ may be interesting.

\subsection{Sum of diffusions}
We quickly mention
another approach that is quite similar. 
Given graphs $G_1, \dots, G_m$ over the same set of vertices $V$, we can define $m$ different Laplacians $\mathbf{L}_1, \dots, \mathbf{L}_m$ which
we restrict to the space orthogonal to  constants and normalize via
$$ \mathbf{L}_i^* = \frac{1}{\lambda_1(\mathbf{L}_i)} \mathbf{L}_i.$$
This gives rise to diffusion operators 
$$ \mathbf{H}_i(t) = \exp\left(-t  \mathbf{L}_i^* \right).$$
The normalization implies that all these $m$ diffusion operators have the same operator norm
$$ \left\| \mathbf{H}_i(t) \right\| = e^{-t}.$$
If there was a common variable, then it would diffuse slowly among all these different diffusion operators and we would expect that the triangle inequality is almost sharp
$$  \left\|  \sum_{i=1}^{m} \mathbf{H}_i(t) \right\| \leq m e^{-t}.$$
This allows us to define a numerical score 
$$ 1 \leq e^t \left\|  \sum_{i=1}^{m} \mathbf{H}_i(t) \right\| \leq m$$
measuring how many of these graphs do indeed have a common variable. Naturally, $\mathbf{H}_i(t)$ will be close to the identity for $t$ small, so the inequality becomes more interesting for $t$ large (and $t$ can play the role of a consistency parameter). This may be interpreted as a simple `one-shot' version of our main idea.

\subsection{Random Matrices}
We note that the dual version of this idea, the matrix exponential of a linear
combination of Laplacians as opposed to a linear combination of matrix
exponentials of Laplacians, has a probabilistic interpretation.
When we consider applying small multiples of random Laplacians, the main
question is the following: if $\mathbf{X}_1, \dots, \mathbf{X}_m \in \mathbb{R}^{n \times n}$ are
$m$ matrices what can be said about products
$$
\mathbf{X}_s = \left(\mathbf{I} + \frac{\varepsilon}{s} \mathbf{X}_{i_1}\right) 
\left(\mathbf{I} + \frac{\varepsilon}{s} \mathbf{X}_{i_2}\right) \dots 
\left(\mathbf{I} + \frac{\varepsilon}{s} \mathbf{X}_{i_s}\right),
$$
as $s \rightarrow \infty$? Here we think of $i_j$ as randomly (independently
and uniformly) chosen elements from $\left\{1, 2, \dots, m\right\}$. An even
more general question was studied by Emme and Hubert \cite{emme} whose result implies that
$$ 
\lim_{s \rightarrow \infty} \mathbf{X}_s = \exp\left( \frac{\varepsilon}{m}
\sum_{k=1}^{m} \mathbf{X}_k \right).
$$
In our setting, we note that
$$   
\exp\left(-\frac{\varepsilon t_k^*}{\lambda_1(\mathbf{L}_k)} \mathbf{L}_k\right) =
\mathbf{I} - \frac{\varepsilon t_k^*}{\lambda_1(\mathbf{L}_k)}
\mathbf{L}_k + \mathcal{\boldsymbol{O}}(\varepsilon^2)
$$
implying that the proper limit of random products of matrix exponentials of
suitably rescaled Laplacians would result in
$$ 
\lim_{s \rightarrow \infty} \mathbf{X}_s = \exp \left( -\frac{\varepsilon}{m}
\mathbf{L}_{\mathbf{t}^*} \right),
$$
which is a natural variant of our approach.

\subsection{Summary and discussion} We repeat the main problem:
suppose we are given a collection $\mathcal{G}$ 
of
$m$ different graphs over the same set of vertices $V$
$$
\mathcal{G} = \{ G_1=(V, E_1),  \ldots, G_m = (V, E_m) \}.
$$
Among all nonconstant functions $f : V \rightarrow \mathbb{R}$ which is the `smoothest' 
with respect to $\mathcal{G}$?
We believe this problem to be of substantial interest. Naturally, there is a
certain vagueness in how the problem is posed: 1) what does it mean for a
function to be smooth? and 2) what does it mean for a function to be commonly
smooth? 

In this paper, we propose the classical spectral definition for 1) and
a minimax approach for 2). One could, naturally, consider a great many other
approaches and we believe it to be a fascinating question for further study.
For example, the maximum norm in the definition of smoothness could be replaced by an $\ell^p$ norm
$$
\boldsymbol{\psi}_1 = \argmin_{\mathbf{x} \in X} \left\|
(s_{\mathbf{L}_1}(\mathbf{x}), \ldots, s_{\mathbf{L}_k}(\mathbf{x}))
\right\|_{\ell^p},
$$
for some $1 \le p \le \infty$, alternatively, the quadratic form in smoothness score
$s_{\mathbf{L_k}}(\mathbf{x})$ could be replaced by a different quantity, for example, 
the Laplacians could be taken to a power as discussed in Remark \ref{gendef}.
In summary, there are many potentially interesting ways to formalize our main
question; the minimax theorem established in this paper solves the problem for 
a specific notion of smoothness inspired by spectral graph theory, and provides
a basis for further work.

\end{document}